\newtheorem{definition}{Definition}[section]
\newtheorem{theorem}[definition]{Theorem}
\newtheorem{lemma}[definition]{Lemma}
\newtheorem{proposition}[definition]{Proposition}
\theoremstyle{remark}
\numberwithin{equation}{section}
\title{Besov Regularity of Weak solutions to a Class of Nonlinear Elliptic Equations}
\author{Huimin Cheng \\
School of Mathematics and Statistics \\
Shandong Normal University\\
Jinan, Shandong, 250358, China\\
e-mail:\,\texttt{2995246312@qq.com}\\
and\\
Feng Zhou\footnote{F.~Zhou is the corresponding author.} \\
School of Mathematics and Statistics \\
Shandong Normal University\\
Jinan, Shandong, 250358, China\\
e-mail:\,\texttt{zhoufeng@u.nus.edu} } 
\date{\today}
\begin{document}
\maketitle
\begin{abstract}
In this article, we study a Besov regularity estimate of weak solutions to a class of nonlinear elliptic equations in divergence form. The main purpose is to establish Calder\'on-Zygmund type estimate in Besov spaces with more general assumptions on coefficients, non-homogeneous term and integrable index. By involving the Sharp maximal function, we establish an oscillation estimate of weak solutions in Orlicz-Sobolev spaces. By deriving a higher integrability estimate of weak solutions, we obtain the desired regularity estimate which expands the Calder\'on-Zygmund theory for nonlinear elliptic equations. \\

Keywords: Nonlinear elliptic equations; Higher integrability; Regularity; Besov spaces.\\

Mathematics Subject classification: 35B65, 35J60.
\end{abstract}

\newpage

\section{Introduction}
The main purpose of this article is to obtain the Besov regularity estimate of weak solutions to a class of nonlinear elliptic equations in divergence form 
\begin{equation}\label{equ}
\operatorname{div} A\left(x,\nabla u\right)=\operatorname{div}\left[\,\frac{P(|F|)}{|F|^2}F\,\right]
\end{equation}
in $\Omega$, where $\Omega\subset\mathbb{R}^n$ ($n\geq2$) is an open domain which is bounded, the unknown $u\in W^{1,P}_{{\rm{loc}}}(\Omega): \Omega\rightarrow\mathbb{R}$, and $P(t)\,(t\geq0)$ is an $N$-function. The Orlicz-Sobolev spaces $W^{1,P}(\Omega)$ and the assumption of $N$-function will be introduced in Section 2. 
In \eqref{equ},  $F:\Omega\rightarrow\mathbb{R}^n$, and $A:\Omega\times\mathbb{R}^n\rightarrow\mathbb{R}^n$ is a Carath\'eodory vector field with general growth and uniformly elliptic conditions, that is, there exist constants $\upsilon,\,L,\,k>0$ and $0<\mu<1$ such that
\begin{equation}
\begin{split}
\begin{aligned}
&\textrm{(A1)}\qquad \left[A(x,\xi)-A(x,\eta)\right]\cdot(\xi-\eta)\geq\upsilon\frac{P\left[\left(\mu^2+|\xi|^2+|\eta|^2\right)^{\frac{1}{2}}\right]}{\mu^2+|\xi|^2+|\eta|^2}|\xi-\eta|^2\,,\nonumber\\
&\textrm{(A2)}\qquad \left|A(x,\xi)-A(x,\eta)\right|\leq L\frac{P\left[\left(\mu^2+|\xi|^{2}+|\eta|^2\right)^{\frac{1}{2}}\right]}{\mu^2+|\xi|^{2}+|\eta|^{2}}|\xi-\eta|\,,\nonumber\\
&\textrm{(A3)}\qquad |A(x,\xi)|\leq k\frac{P\left[\left(\mu^2+|\xi|^2\right)^{\frac{1}{2}}\right]}{(\mu^2+|\xi|^2)^{\frac{1}{2}}}\nonumber\\
\end{aligned}
\end{split}
\end{equation}
for every $\xi,\,\eta\,\in\mathbb{R}^n$ and for almost all $x\in\,\Omega$. \\
\indent The regularity of solutions to elliptic equations was traced back to the Weyl's lemma. It was the first result in the development of regularity theories of elliptic equations, and it was extended to the case of general elliptic problems. Later on in relevant papers (\cite{9,10,12-3,12-4,12-5,12-6}),  the $W^{1,p}$ estimates for elliptic equations were proved. With the deepening of theoretical research, the H\"{o}lder regularity theories of elliptic equations in divergence form were widely studied. At present, the studies of elliptic equations are concerned with the regularity estimates of weak solutions in Besov spaces (\cite{18-1,18-2,18-3}). Compared with the classical Sobolev spaces, Besov spaces consist of a wide class of functions. \\
\indent Bais\'on A.~L. (\cite{1}) studied a class of nonlinear elliptic equations in divergence form
\begin{equation}
\operatorname{div}A(x,Du)=G, \nonumber
\end{equation}
and obtained a Calder\'on-Zygmund type regularity of weak solutions in Besov spaces. Then Clop A. (\cite{2}) extended the result in Besov spaces. Moreover, In \cite{3}, Lyaghfouri A. studied the equation
\begin{equation}\label{AL-eqn}
\operatorname{div}\left[\frac{a(|\nabla u|)}{|\nabla u|}\nabla u\right]=\operatorname{div}\left[\frac{a(|F|)}{|F|}F\right],\ \ u\in W^{1,P}(\mathbb{R}^n).
\end{equation}
It should be mentioned that the author established a higher integrability of weak solutions to \eqref{AL-eqn}. \\
\indent In this article, we assume that there exists a sequence of measurable non-negative functions $g_k\in L^{\frac{n}{\alpha}}(\Omega)\,(k\in\mathbb{N},\ 0<\alpha<1)$ satisfying that
\begin{equation} 
\textrm{(A4)}\quad 
\left\{
\begin{aligned}
&\ \sum_k\|g_k\|^q_{L^{\frac{n}{\alpha}}(\Omega)}<\infty,\,(1\leq q<\infty)\nonumber\\ 
&\ \left|A(x,\xi)-A(y,\xi)\right|\leq|x-y|^{\alpha}\left(g_{k}(x)+g_{k}(y)\right)\frac{P\left[\left(\mu^2+|\xi|^2\right)^{\frac{1}{2}}\right]}{\left(\mu^2+|\xi|^2\right)^{\frac{1}{2}}} \nonumber
\end{aligned}
\right.
\end{equation}
for each $\xi\in\mathbb{R}^n$, and almost every $x, y\in\Omega$ such that $2^{-k}\leq|x-y|<2^{-k+1}$. According to (A4), we write $(g_k)_k\in l^q(L^{\frac{n}{\alpha}}(\Omega))$ in short. \\
\indent In particular, we assume that there exists a function $g\in L^{\frac{n}{\alpha}}(\Omega)\,(0<\alpha<1)$ such that
\begin{equation}\label{1.4}
|A(x,\xi)-A(y,\xi)|\leq|x-y|^\alpha(g(x)+g(y))\frac{P\left[\left(\mu^2+|\xi|^2\right)^{\frac{1}{2}}\right]}{\left(\mu^2+|\xi|^2\right)^{\frac{1}{2}}}
\end{equation}
for almost every $x, y\in\Omega$ and all $\xi\in\mathbb{R}^n$. By introducing an auxiliary function
\begin{equation}\label{Vp-def}
V_P(\xi)=\left[\frac{P\left[\left(\mu^2+|\xi|^2\right)^{\frac{1}{2}}\right]}{\mu^2+|\xi|^2}\right]^{\frac{1}{2}}\xi
\end{equation}
with $\xi\in\mathbb{R}^{n}$, we present the main results of this article. 
\begin{theorem}\label{theorem1.3}
Let $0\!<\alpha<\!1$. Assume that $A$ satisfies hypotheses {\rm{(A1)}}-{\rm{(A3)}} and \eqref{1.4} with $0<\mu<1$, and $P(t)$ satisfies {\rm{(P1)}} and {\rm{(P2)}}. If $u\in W^{1,P}_{\rm{loc}}(\Omega)$ is a weak solution to
\begin{equation}\label{1.5}
{\rm{div}}A(x,\nabla u)=0\qquad{\rm{in}}\,\,\Omega,
\end{equation}
then $V_P(\nabla u)\in B^\alpha_{2,\infty}(\Omega)$ locally.
\end{theorem}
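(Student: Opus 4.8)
The plan is to establish the finite–difference characterisation of the target space: for each pair of concentric balls $B_R\subset B_{2R}$ with $B_{4R}\Subset\Omega$ there should be a constant $C$, depending on the structural data, on $R$, and on $\norm{P(\abs{\nabla u})}_{L^{\gamma}(B_{4R})}$ for a suitable $\gamma>1$, such that
\[
\norm{\tau_h V_P(\nabla u)}_{L^2(B_R)}\le C\abs{h}^{\alpha}\qquad\text{for all }h\in\R^n,\ 0<\abs{h}<R,
\]
where $\tau_h w(x):=w(x+h)-w(x)$. Since $V_P(\nabla u)\in L^2_{\mathrm{loc}}(\Omega)$ — because $u\in W^{1,P}_{\mathrm{loc}}(\Omega)$ and $\abs{V_P(\xi)}^2\le P[(\mu^2+\abs{\xi}^2)^{1/2}]$ — this is exactly the assertion $V_P(\nabla u)\in B^{\alpha}_{2,\infty}(B_R)$, and letting $B_R$ exhaust $\Omega$ yields the local conclusion.

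To obtain it I would use translation invariance: $u_h:=u(\cdot+h)$ solves $\operatorname{div}A(x+h,\nabla u_h)=0$ on $\{x:x+h\in\Omega\}$, so subtracting \eqref{1.5} gives $\operatorname{div}\bigl[A(x+h,\nabla u_h)-A(x,\nabla u)\bigr]=0$ in $B_{3R}$. Testing this with $\varphi=\eta^2(u_h-u)$, where $\eta\in C_c^\infty(B_{2R})$, $0\le\eta\le1$, $\eta\equiv1$ on $B_R$, $\abs{\nabla\eta}\le C/R$, and splitting
\[
A(x+h,\nabla u_h)-A(x,\nabla u)=\bigl[A(x+h,\nabla u_h)-A(x+h,\nabla u)\bigr]+\bigl[A(x+h,\nabla u)-A(x,\nabla u)\bigr],
\]
the first (monotone) bracket is controlled below, after pairing with $\nabla u_h-\nabla u$, by (A1) together with the standard $N$-function inequality $\bigl[A(x,\xi)-A(x,\eta)\bigr]\cdot(\xi-\eta)\gtrsim\abs{V_P(\xi)-V_P(\eta)}^2$ (valid under (P1)--(P2)), while the second (oscillation) bracket is controlled above by \eqref{1.4}. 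Applying Young's inequality to the terms generated by $\nabla\eta$ and by the oscillation bracket — using the growth bound (A2) and its companion $\abs{A(x,\xi)-A(x,\eta)}\lesssim\bigl(P[(\mu^2+\abs{\xi}^2+\abs{\eta}^2)^{1/2}]/(\mu^2+\abs{\xi}^2+\abs{\eta}^2)\bigr)^{1/2}\abs{V_P(\xi)-V_P(\eta)}$, and absorbing the resulting $\varepsilon\int\eta^2\abs{V_P(\nabla u_h)-V_P(\nabla u)}^2$ on the left — one arrives at a Caccioppoli-type inequality in which $\int_{B_R}\abs{V_P(\nabla u_h)-V_P(\nabla u)}^2$ is dominated by $R^{-2}\int_{B_{2R}}\abs{u_h-u}^2\,\Phi(\nabla u_h,\nabla u)$ plus $\abs{h}^{2\alpha}\int_{B_{2R}}\bigl(g(x+h)^2+g(x)^2\bigr)\Psi(\nabla u)$, up to a lower-order term of size $\abs{h}^{1+\alpha}$; here $\Phi,\Psi$ are the growth factors dictated by $P$ (for $P(t)=t^p$ one has $\Phi\simeq(\mu^2+\abs{\nabla u_h}^2+\abs{\nabla u}^2)^{(p-2)/2}$ and $\Psi\simeq P(\abs{\nabla u})$).

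It then remains to show that the two integrals are $O(\abs{h}^{2\alpha})$. For the first, the elementary bound $\norm{\tau_h u}_{L^s(B_{2R})}\le C\abs{h}\norm{\nabla u}_{L^s(B_{3R})}$, applied at the exponent $s$ that Hölder's inequality pairs with $\Phi$ — which requires only the natural integrability $P(\abs{\nabla u})\in L^1_{\mathrm{loc}}$ — makes this term $O(\abs{h}^2)=O(\abs{h}^{2\alpha})$ since $\alpha<1$. For the oscillation integral I would use Hölder's inequality with $g^2\in L^{n/(2\alpha)}(B_{2R})$ against $\Psi(\nabla u)$ in the conjugate exponent $n/(n-2\alpha)$ (legitimate since $2\alpha<2\le n$); this requires $\Psi(\nabla u)\in L^{n/(n-2\alpha)}_{\mathrm{loc}}$ — that is, $\nabla u$ in a Lebesgue space strictly above its natural one, precisely at the Sobolev-conjugate scale fixed by $g\in L^{n/\alpha}$. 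This is exactly where the higher-integrability estimate for weak solutions established earlier in the paper enters: it guarantees this integral is finite and locally bounded, whence the oscillation term is $O(\abs{h}^{2\alpha})$. Combining the three bounds gives $\norm{\tau_h V_P(\nabla u)}_{L^2(B_R)}\le C\abs{h}^{\alpha}$, which is the claim.

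I expect the oscillation term to be the main obstacle. To reach the order $\abs{h}^{2\alpha}$ — rather than merely $\abs{h}^{\alpha}$, which would give only $V_P(\nabla u)\in B^{\alpha/2}_{2,\infty}$ — one must extract from the oscillation bracket, via the structural inequalities relating $A$ and $V_P$ under (P1)--(P2), a full power of $\abs{V_P(\nabla u_h)-V_P(\nabla u)}$ that Young's inequality can absorb on the left, leaving behind exactly $\abs{h}^{2\alpha}\int(g^2+g_h^2)\Psi(\nabla u)$; and one must verify that the higher-integrability estimate really supplies the Lebesgue exponent this integral demands, uniformly in the general $N$-function framework under (P1)--(P2) and not only for power growth (where the precise power of $(\mu^2+\abs{\nabla u}^2)$ appearing in $\Psi$ varies with the growth of $P$). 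The remaining ingredients — the monotonicity and growth lemmas linking $A$ to $V_P$, and the difference-quotient bounds for $u$ — are by now standard.
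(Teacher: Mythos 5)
Your proposal follows essentially the same route as the paper's proof: testing with the (equivalent) difference-quotient test function $\Delta_{-h}(\eta^2\Delta_h u)$, splitting into the monotone bracket (bounded below via (A1) and converted to $|\Delta_h V_P(\nabla u)|^2$ by (P2)) and the coefficient-oscillation bracket (bounded via \eqref{1.4}), absorbing by Young's inequality, and then controlling the resulting term $|h|^{2\alpha}\int (g(x)+g(x+h))^2\,P\bigl[(\mu^2+|\nabla u|^2)^{1/2}\bigr]$ by H\"older with $g\in L^{n/\alpha}$ against $P(|\nabla u|)\in L^{n/(n-2\alpha)}_{\mathrm{loc}}$, the latter supplied exactly by the higher-integrability Proposition \ref{higher int}. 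The points you flag as delicate (extracting the full power $|h|^{2\alpha}$ and the correct Lebesgue exponent for $P(|\nabla u|)$) are resolved in the paper precisely as you anticipate, so your outline is correct.
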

\begin{theorem}\label{MainTheorem}
Let $0\!<\!\alpha<\!1$, and $1\leq q<\frac{2n}{n-2\alpha}$. Assume that the hypotheses {\rm{(A1)}}-{\rm{(A4)}} and {\rm{(P1)}}-{\rm{(P2)}} hold. If $u\in W^{1,P}_{\rm{loc}}(\Omega)$ is a weak solution to \eqref{equ} with $0<\mu<1$ and $\frac{P(|F|)}{|F|^2}F\in B^\alpha_{2,q}(\Omega)$, then $V_P(\nabla u)\in B^\alpha_{2,q}(\Omega)$ locally. 
\end{theorem}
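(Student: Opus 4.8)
The plan is to run a Calder\'on--Zygmund scheme on dyadic translation scales, driven by a higher integrability gain, so that the finite-difference characterisation of $B^{\alpha}_{2,q}$ can be summed against the $l^{q}(L^{n/\alpha})$ condition {\rm (A4)} and the Besov datum. Fix a ball $B_{2R}=B_{2R}(x_{0})\Subset\Omega$ and write $G:=\tfrac{P(|F|)}{|F|^{2}}F$, $V:=V_{P}(\nabla u)$ and $\tau_{h}w(x):=w(x+h)$. \emph{Step 1: higher integrability.} From {\rm (A1)}--{\rm (A3)} I would first establish a Caccioppoli inequality for \eqref{equ}; together with the Orlicz Sobolev--Poincar\'e inequality and {\rm (P1)}--{\rm (P2)} this produces a reverse H\"older inequality for $|V|^{2}\simeq P(|\nabla u|)$, and Gehring's lemma then gives $V\in L^{2+\varepsilon_{0}}_{\rm loc}(\Omega)$ for some $\varepsilon_{0}>0$. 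Since $q<\tfrac{2n}{n-2\alpha}$, the Besov--Sobolev embedding gives $G\in L^{\frac{2n}{n-2\alpha}}_{\rm loc}(\Omega)$, and feeding this datum integrability into the higher integrability estimate for \eqref{equ} (in the spirit of Lyaghfouri's analysis of \eqref{AL-eqn}) upgrades the exponent to
\[
V=V_{P}(\nabla u)\in L^{\frac{2n}{n-2\alpha}}_{\rm loc}(\Omega),
\]
precisely the Sobolev exponent attached to the scale $B^{\alpha}_{2,\cdot}$; this is the quantity that makes the nonlinear error terms of Step~2 absorbable, and it is here that the range of $q$ enters.

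\emph{Step 2: difference (oscillation) estimate.} For $0<|h|<R/4$ the translate $u_{h}:=\tau_{h}u$ solves $\operatorname{div}A(x+h,\nabla u_{h})=\operatorname{div}\tau_{h}G$ on $B_{2R}$. Subtracting the equations for $u$ and $u_{h}$, testing with $\zeta^{2}(u-u_{h})$ for a cut-off $\zeta$ between $B_{R}$ and $B_{2R}$, and using the monotonicity {\rm (A1)} (so that $[A(x,\xi)-A(x,\eta)]\cdot(\xi-\eta)\gtrsim|V_{P}(\xi)-V_{P}(\eta)|^{2}$), the growth {\rm (A2)} and the oscillation bound {\rm (A4)} on the coefficient error $A(x+h,\cdot)-A(x,\cdot)$, and Young's inequality for the $N$-function $P$ on the term carrying $\tau_{h}G-G$, I would obtain, for $2^{-k}\le|h|<2^{-k+1}$, after H\"older's inequality with the conjugate exponents $\bigl(\tfrac{n}{\alpha},\tfrac{2n}{n-2\alpha},2\bigr)$ (using Step~1) and absorption of a factor $\|\tau_{h}V-V\|_{L^{2}(B_{2R})}$,
\[
2^{k\alpha}\,\|\tau_{h}V-V\|_{L^{2}(B_{R})}\ \lesssim\ \|g_{k}\|_{L^{\frac{n}{\alpha}}(B_{2R})}\bigl(\|V\|_{L^{\frac{2n}{n-2\alpha}}(B_{2R})}+1\bigr)+2^{k\alpha}\|\tau_{h}G-G\|_{L^{2}(B_{2R})}+2^{-k(1-\alpha)}C(R)\|V\|_{L^{2}(B_{2R})},
\]
the last term being the geometrically small contribution of the cut-off, controlled by the energy of $u$. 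The same per-scale bound can be reached by first comparing $u$, on each ball, with the solution $v$ of the frozen homogeneous equation $\operatorname{div}A(x,\nabla v)=0$ --- whose $B^{\alpha}_{2,\infty}$ regularity is the single-scale counterpart of Theorem~\ref{theorem1.3} --- and then estimating the oscillation of $V$ by means of the fractional sharp maximal function; this is the route announced in the introduction.

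\emph{Step 3: summation and conclusion.} Taking the supremum over $2^{-k}\le|h|<2^{-k+1}$ and then the $\ell^{q}$-norm over $k\ge k_{0}$ (with $2^{-k_{0}}\simeq R$) in the inequality above, and invoking the characterisation
\[
\|f\|_{B^{\alpha}_{2,q}(B_{R})}\ \simeq\ \|f\|_{L^{2}(B_{R})}+\Bigl(\sum_{k\ge k_{0}}\bigl(2^{k\alpha}\!\!\sup_{2^{-k}\le|h|<2^{-k+1}}\!\!\|\tau_{h}f-f\|_{L^{2}(B_{R})}\bigr)^{q}\Bigr)^{1/q}
\]
together with {\rm (A4)}, i.e.\ $(g_{k})_{k}\in l^{q}(L^{n/\alpha}(\Omega))$, and the hypothesis $G\in B^{\alpha}_{2,q}(\Omega)$, I arrive at
\[
\|V\|_{B^{\alpha}_{2,q}(B_{R})}\ \lesssim\ \|(g_{k})_{k}\|_{l^{q}(L^{\frac{n}{\alpha}}(B_{2R}))}\bigl(\|V\|_{L^{\frac{2n}{n-2\alpha}}(B_{2R})}+1\bigr)+\|G\|_{B^{\alpha}_{2,q}(B_{2R})}+\|V\|_{L^{2}(B_{2R})},
\]
which is finite by Step~1 and the hypotheses. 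As $B_{2R}\Subset\Omega$ was arbitrary, this gives $V_{P}(\nabla u)\in B^{\alpha}_{2,q}(\Omega)$ locally. To avoid assuming the Besov norm finite a priori one truncates the dyadic sum at level $N$, derives the bound with constants independent of $N$, and lets $N\to\infty$; alternatively one regularises $A$ and $F$, proves the uniform estimate for the smooth solutions, and passes to the limit.

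\emph{The main difficulty.} The hard part will be to carry the nonlinear structure function $V_{P}$ through the estimate while the coefficient oscillation is only $l^{q}$-summable rather than bounded (as in Theorem~\ref{theorem1.3}). Two points demand care: pinning down Step~1 so that the integrability of $V_{P}(\nabla u)$ reaches exactly $\tfrac{2n}{n-2\alpha}$ --- which is precisely what $q<\tfrac{2n}{n-2\alpha}$ buys, via $B^{\alpha}_{2,q}\hookrightarrow L^{\frac{2n}{n-2\alpha}}$ applied to $G$ --- and organising the H\"older/Young bookkeeping in Step~2 so that the $N$-function $P$ never obstructs the passage between $P(|\nabla u-\nabla u_{h}|)$, $[A(x,\xi)-A(x,\eta)]\cdot(\xi-\eta)$, $|V_{P}(\xi)-V_{P}(\eta)|^{2}$ and the $L^{2}$-based Besov scale, which relies on the algebraic comparisons available under {\rm (A1)}--{\rm (A3)} and {\rm (P1)}--{\rm (P2)}.
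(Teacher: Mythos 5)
Your Steps 2--3 are, in substance, the paper's own proof of Theorem \ref{MainTheorem}: the paper tests with $\varphi=\Delta_{-h}(\eta^{2}\Delta_{h}u)$ (the same as your $\zeta^{2}(u-u_{h})$ after discrete integration by parts), uses (A1), (A2), (A4), Young absorption and (P2) to reach exactly your per-$h$ bound for $\|\Delta_{h}V_{P}\|_{L^{2}(B_{R/2})}/|h|^{\alpha}$ with the three sources $g_{k}$-term, cut-off term of size $|h|^{1-\alpha}$, and $\|\Delta_{h}[\tfrac{P(|F|)}{|F|^{2}}F]\|_{L^{2}}/|h|^{\alpha}$; it then takes the $L^{q}(\tfrac{\operatorname{d}\!h}{|h|^{n}})$ norm, passes to polar coordinates and pairs the dyadic shells $2^{-k-1}<|h|\le 2^{-k}$ with $g_{k}$, applying H\"older with exponents $(\tfrac{n}{\alpha},\tfrac{n}{n-2\alpha})$ and summing against $\|(g_{k})_{k}\|_{l^{q}(L^{n/\alpha})}$ and $\|\tfrac{P(|F|)}{|F|^{2}}F\|_{B^{\alpha}_{2,q}}$. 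Your dyadic-sup characterisation of the Besov seminorm is an equivalent reformulation of that computation, so this part matches.

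The genuine gap is in your Step 1. Caccioppoli plus Gehring gives only $P(|\nabla u|)\in L^{1+\varepsilon_{0}}_{\rm loc}$ with $\varepsilon_{0}$ a small constant determined by the structural constants in (A1)--(A3); it cannot by itself reach the prescribed exponent $\tfrac{n}{n-2\alpha}$ (equivalently $V_{P}(\nabla u)\in L^{2n/(n-2\alpha)}_{\rm loc}$), whose gain $\tfrac{2\alpha}{n-2\alpha}$ is unrelated to $\varepsilon_{0}$ and becomes arbitrarily large as $\alpha\to1$ for small $n$. Moreover, for merely measurable $x$-dependence such an arbitrary upgrade is false in general: one must exploit that (A4) forces $x\mapsto A(x,\xi)$ to be locally uniformly VMO (the second lemma of Section 4 of the paper), and then run a genuine Calder\'on--Zygmund-type argument. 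This is precisely what the paper's Section 3 supplies: the sharp-maximal-function oscillation estimate of Proposition \ref{Osc-estimate} (applied to mollified vector fields $A_{\epsilon}$, $F_{\epsilon}$ to justify the qualitative $L^{Q}$-regularity), followed in Proposition \ref{higher int} by an iteration through a chain of intermediate $N$-functions $Q_{i}$ with a Sobolev-type inequality at each stage, which converts $\tfrac{P(|F|)}{|F|^{2}}F\in L^{\frac{2n}{n-2\alpha}}$ (obtained, as you say, from $B^{\alpha}_{2,q}\hookrightarrow L^{\frac{2n}{n-2\alpha}}$ using $q<\tfrac{2n}{n-2\alpha}$) into $P(|\nabla u|)\in L^{\frac{n}{n-2\alpha}}_{\rm loc}$. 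Your phrase ``feeding this datum integrability into the higher integrability estimate'' is exactly this missing argument; as written, the Gehring route does not close it, and you correctly identified this as the main difficulty without resolving it.
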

See Section 2 for the definitions of $W^{1,P}(\Omega)$ and $B^{\alpha}_{2,q}(\Omega)$, and the assumptions {\rm{(P1)}}-{\rm{(P2)}} of $N$-functions. \\
\indent The contribution of the main results is to study a wide class of elliptic equations in divergence form which is a generalization of classical forms. Our aim is to obtain a Besov regularity estimate of weak solutions, hence the well-known estimates could become a corollary. The hypotheses (A1)-(A4) shall be an extension of the VMO conditions. Meanwhile, we study the weak solutions in the $W^{1,P}(\Omega)$ spaces with a variable index $P(t)$, which could cover the case for a real value $p$. \\
\indent This article is organized as follows. In section 2 we give some technical lemmas and tools such as classical inequalities. In section 3, we establish Propositions relating to the higher integrability of weak solutions. In section 4, we present the proofs of Theorem \ref{theorem1.3} and Theorem \ref{MainTheorem}, respectively.

\section{Preliminary}

\subsection{$N$-function}
\begin{definition}[\cite{22}]
We say that a function $P(t)$: $[0,\infty)\rightarrow[0,\infty)$ is an $N$-function, if it satisfies the following conditions:\\ 
(1) There exist derivatives $P'(t)$ and $P''(t)$ of $P(t)$ such that $P(0)=0$, $P(1)=1$, $P'(t)>0$ and $P''(t)>0$ for $t>0$. \\
(2) ($\Delta_2$-condition) There exists $c_1>0$ such that $P(2t)\leq c_1P(t)$ for all $t\geq0$. By $\Delta_2(P)$ we denote the smallest constant $c_1$.
\end{definition}
We write $f\sim g$ if and only if there exist constants $C_{1},C_{2}>0$ such that $C_{1}f\leq g\leq C_{2}f$. Let $P(t)$ be an $N$-function. It is obvious that $P(2t)\sim P(t)$. Defining the complementary function $\tilde{P}$ of $P$ by 
\begin{equation}\label{2.1}
\tilde{P}(s)=\sup_{t\geq0}\{st-P(t)\},
\end{equation}
one finds that 
\begin{equation}\label{2.3}
P(t)\sim P'(t)t\ \ \text{and}\ \ \tilde{P}\left(\frac{P(t)}{t}\right)\sim P(t),
\end{equation}
uniformly in $t\geq0$, see \cite{22}. \\
\indent For every $\epsilon>0$, there exists $C(\epsilon,\Delta_2(\tilde{P}))$ such that for all $t, u\geq0$, there holds
\begin{equation}\label{young1}
t\,u\leq\epsilon\,P(t)+C(\epsilon,\Delta_2(\tilde{P}))\tilde{P}(u),
\end{equation}
which is a Young type inequality (\cite{22}). In particular, 
\begin{equation}\label{young2}
ab\leq\epsilon\,a^2+C(\epsilon)\,b^2,
\end{equation}
with $a, b>0$. \\
\indent Throughout this article, we assume that $N$-function $P(t)$ satisfies the following two assumptions:\\
(P1) $P(s t)\sim P(s)P(t)$, $\tilde{P}(st)\sim \tilde{P}(s)\tilde{P}(t)$, $\frac{P(t)}{t^2}$ is monotone increasing on $(0,+\infty)$. \\
(P2) There is a positive constant $\hat{C}$ such that
\begin{equation}
\hat{C}^{-1}\,\frac{P\left[(\mu^2+|\xi|^2+|\eta|^2)^{\frac{1}{2}}\right]}{\mu^2+|\xi|^2+|\eta|^2}\leq\frac{|V_P(\xi)-V_P(\eta)|^2}{|\xi-\eta|^2}\leq \hat{C}\,\frac{P\left[(\mu^2+|\xi|^2+|\eta|^2)^{\frac{1}{2}}\right]}{\mu^2+|\xi|^2+|\eta|^2}
\end{equation}
for any $\xi,\,\eta\in\mathbb{R}^n$ and $|\xi-\eta|\neq 0$.\\
\indent At the end of this subsection, by involving the $N$-function, we present a modified version of the H\"older inequality as 
\begin{eqnarray}\label{2.7}
\int_{\Omega}|f(x)g(x)|\operatorname{d}\!x&\leq&C\,\tilde{P}^{-1}\left(\int_{\Omega}\tilde{P}(|f(x)|)\operatorname{d}\!x\right)\cdot P^{-1}\left(\int_{\Omega}P(|g(x)|)\operatorname{d}\!x\right).
\end{eqnarray}
Here $P^{-1}(P(t))=t$ and $\tilde{P}(t)$ is defined by \eqref{2.1}. 

\subsection{Function Spaces}
By using an $N$-function $P(t)$, we recall the definition of Orlicz spaces $L^P(\mathbb{R}^n)$ with its norm (\cite{5}):
\begin{eqnarray*}
L^P(\mathbb{R}^n)&=&\left\{u(x)\in L^1(\mathbb{R}^n)\,\bigg|\,\int_{\mathbb{R}^n}P(|u(x)|)\operatorname{d}\!x<\infty\right\},\\
\|u\|_{L^{P}(\mathbb{R}^n)}&=&\inf\left\{k>0\,\bigg|\,\int_{\mathbb{R}^n}P\left(\frac{|u(x)|}{k}\right)\operatorname{d}\!x<1\right\}.
\end{eqnarray*}
The dual space of $L^P(\mathbb{R}^n)$ is $L^{\tilde{P}}(\mathbb{R}^n)$, where $\tilde{P}$ is introduced in \eqref{2.1}. \\
\indent The Orlicz-Sobolev spaces $W^{1,P}(\mathbb{R}^n)$ with its norm are given by (\cite{6})
\begin{eqnarray*}\label{W1P}
W^{1,P}(\mathbb{R}^n)&=&\left\{u\in L^P(\mathbb{R}^n)\,\big|\,|\nabla u|\in L^P(\mathbb{R}^n)\right\},\\
\|u\|_{W^{1,P}(\mathbb{R}^n)}&=&\|u\|_{L^{P}(\mathbb{R}^n)}+\|\nabla u\|_{L^{P}(\mathbb{R}^n)}\,.\nonumber
\end{eqnarray*}
It is clear that a function $u\in W^{1,P}_{\text{loc}}(\Omega)$, if $u\in W^{1,P}(\Omega_0)$ for every $\Omega_0\Subset\Omega$ (\cite{21}). \\
\indent The Besov spaces $B^{\alpha}_{p, q}(\mathbb{R}^n)$ with its norm are defined via (\cite{7})
\begin{eqnarray*}
\|u\|_{B^\alpha_{p, q}(\mathbb{R}^n)} &=&\|u\|_{L^p(\mathbb{R}^n)}+\left[u\right]_{B^{\alpha}_{p, q}(\mathbb{R}^n)}<\infty,\quad \left(0<\alpha<1,\ 1\leq p<\infty\right) \nonumber\\
\left[u\right]_{B^{\alpha}_{p, q}(\mathbb{R}^n)} &=&
\left\{ 
\begin{array}{ll}
\displaystyle \left(\int_{\mathbb{R}^n}\left(\int_{\mathbb{R}^n}\frac{|\Delta_{h} u|^p}{|h|^{\alpha p}}\operatorname{d}\!x\right)^{\frac{q}{p}}\frac{\operatorname{d}\!h}{|h|^n}\right)^{\frac{1}{q}}<\infty, & 1\leq q<\infty; \\[0.1cm]
\displaystyle \sup_{h\in\mathbb{R}^n}\left(\int_{\mathbb{R}^n}\frac{|\Delta_{h} u|^p}{|h|^{\alpha p}}\operatorname{d}\!x\right)^{\frac{1}{p}}<\infty, & q=\infty. 
\end{array}
\right. 
\end{eqnarray*}
Here $\Delta_{h}u=u(x+h)-u(x)$.

\subsection{Hardy-Littlewood Maximal Function and Sharp Maximal Function}
\begin{definition}\label{hardy}
Let $f\in L^1_{\operatorname{loc}}(\mathbb{R}^n)$, we define the Hardy-Littlewood maximal function of $f$ by
\begin{equation}
\mathcal{M}f(x)=\sup_{r>0}\frac{1}{|B_{r}(x)|}\int_{B_{r}(x)}|f(y)|\operatorname{d}\!y=\sup_{r>0}\fint_{B_{r}(x)}|f(y)|\operatorname{d}\!y. \nonumber
\end{equation}
Here $\fint_{B}=\frac{1}{|B|}\int_{B}$ denotes the mean of integral. By using the $N$-function $P(t)$, we define 
\begin{equation}
\mathcal{M}_Pf(x)=\sup_{r>0}P^{-1}\left(\fint_{B_{r}(x)}P(|f(y)|)\operatorname{d}\!y\right).\nonumber
\end{equation}
\end{definition}

\begin{definition}[\cite{24}]\label{sharp}
Assume $f\in L^1_{\operatorname{loc}}(\mathbb{R}^n)$, we define 
\begin{equation}
\mathcal{M}^\#f(x)=\sup_{r>0}\fint_{B_{r}(x)}|f(y)-f_{B_{r}(x)}|\operatorname{d}\!y\nonumber
\end{equation}
as the Sharp maximal function of $f$, where $f_{B_{r}(x)}$ is the mean value of $f$ in $B_{r}(x)$.
\end{definition}
One can show that, if $f\in L^P(\mathbb{R}^n)$, then $\mathcal{M}f\in L^P(\mathbb{R}^n)$, and there exist positive constants $C_{1},C_{2}>0$ and $\vartheta\geq2$ with $B_{\vartheta R}\subset\mathbb{R}^n$ such that 
\begin{eqnarray}\label{M-Msharp}
\int_{B_R}P\left(\mathcal{M}f(x)\right)\operatorname{d}\!x\leq C_{1}\int_{B_R}P\left(|f(x)|\right)\operatorname{d}\!x\leq C_{2}\int_{B_{\vartheta R}}P\left(\mathcal{M}^{\#}f(x)\right)\operatorname{d}\!x.
\end{eqnarray}

\subsection{Weak Solution}
We present the definition of the weak solutions to \eqref{equ}. 
\begin{definition}\label{weak de}
If for any $\varphi\in C^\infty_0(\Omega)$, there holds
\begin{equation}\label{weak}
\int_{\Omega} A(x, \nabla u)\cdot\,\nabla\varphi\operatorname{d}\!x=\int_{\Omega}\frac{P(|F|)}{|F|^2}F\,\cdot\,\nabla\varphi\operatorname{d}\!x, 
\end{equation}
then $u\in W^{1,P}_{{\rm{loc}}}(\Omega)$ is a weak solution to \eqref{equ}. Here we call $\varphi$ is a test function. 
\end{definition}

\subsection{Iteration Lemma}
The following lemma will play an important role in the proof of main theorem.  
\begin{lemma}[\cite{GM}]\label{hole}
Let $a: [r, R_0]\rightarrow\mathbb{R}$ be a non-negative bounded function, and $0<\nu<1$, $A,\,B\geq1$, $\beta>0$. Assume that
\begin{eqnarray*}
a(s)\leq\nu\,a(t)+\frac{A}{(t-s)^\beta}+B.
\end{eqnarray*}
for all $r\leq s<t\leq R_0$. Then
\begin{eqnarray*}
a(r)\leq\frac{c\,A}{(R_0-r)^\beta}+c\,B,
\end{eqnarray*}
where $c=c(\nu, \beta)>0$.
\end{lemma}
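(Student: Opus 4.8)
The plan is to run the standard hole-filling iteration over a geometric sequence of radii. First I would fix a parameter $\tau=\tau(\nu,\beta)\in(0,1)$, to be pinned down at the end, and set $\rho_i=r+(1-\tau^i)(R_0-r)$ for $i\geq0$, so that $\rho_0=r$, $\rho_i\uparrow R_0$, each $\rho_i$ lies in $[r,R_0]$, and $\rho_{i+1}-\rho_i=(1-\tau)\tau^i(R_0-r)$. Applying the hypothesis with $s=\rho_i$ and $t=\rho_{i+1}$ gives, for every $i\geq0$,
\begin{equation*}
a(\rho_i)\leq\nu\,a(\rho_{i+1})+\frac{A}{(1-\tau)^\beta(R_0-r)^\beta}\,\tau^{-i\beta}+B.
\end{equation*}

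Next I would iterate this bound $k$ times starting from $i=0$, which (by an immediate induction, collecting the coefficient $\nu^i$ in front of the term produced at stage $i$) yields
\begin{equation*}
a(r)\leq\nu^k a(\rho_k)+\frac{A}{(1-\tau)^\beta(R_0-r)^\beta}\sum_{i=0}^{k-1}\bigl(\nu\tau^{-\beta}\bigr)^i+B\sum_{i=0}^{k-1}\nu^i.
\end{equation*}
Here is the one point where the choice of $\tau$ matters: since $\nu<1$, I may take $\tau$ close enough to $1$ that $\nu\tau^{-\beta}<1$, so the first geometric sum converges; the second converges because $\nu<1$. Moreover $a$ is bounded and $\nu<1$, so the remainder $\nu^k a(\rho_k)\to0$ as $k\to\infty$. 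Letting $k\to\infty$ therefore gives
\begin{equation*}
a(r)\leq\frac{A}{(1-\tau)^\beta\bigl(1-\nu\tau^{-\beta}\bigr)}\cdot\frac{1}{(R_0-r)^\beta}+\frac{B}{1-\nu}.
\end{equation*}

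Finally I would set $c=c(\nu,\beta)=\max\bigl\{(1-\tau)^{-\beta}(1-\nu\tau^{-\beta})^{-1},\,(1-\nu)^{-1}\bigr\}$, which depends only on $\nu$ and $\beta$ through the fixed choice of $\tau$, to conclude $a(r)\leq c\,A/(R_0-r)^\beta+c\,B$; the assumptions $A,B\geq1$ play no essential role but cause no harm. There is no genuine obstacle in this argument — it is entirely elementary. The only things to watch are that $\tau$ must be chosen in dependence on \emph{both} $\nu$ and $\beta$ so that $\nu\tau^{-\beta}<1$ (guaranteeing summability of the dangerous geometric series), and that it is precisely the boundedness of $a$ that licenses discarding the iterated remainder term $\nu^k a(\rho_k)$ in the limit.
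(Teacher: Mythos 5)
Your proof is correct and is exactly the standard hole-filling iteration argument; the paper itself does not prove this lemma but cites it from Giaquinta--Martinazzi, where the same geometric-radii iteration with $\tau^\beta>\nu$ is used. No gaps: the choice of $\tau$ depending on both $\nu$ and $\beta$, the convergence of the two geometric series, and the use of boundedness of $a$ to kill the remainder $\nu^k a(\rho_k)$ are all handled properly.
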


\section{Oscillation Estimate and Higher Integrability}
In this section, whenever there is no risk of misunderstanding, we suppress $\operatorname{d}\!x$ from the integral express on domains. We note that the practice on using $C$, possible with subindices, to denote various positive constants, which may be rendered differently from line to line according to the contents. 
\subsection{Oscillation Estimate}
For the vector field $A(x, \xi)$ appeared in \eqref{equ}, we let 
\begin{equation}\label{equ-ABR}
A_{B}(\xi)=\fint_{B}A(x,\xi)\operatorname{d}\!x 
\end{equation}
for $\xi\in\mathbb{R}^{n}$ and a ball $B\subset\Omega$. Then we define  
\begin{equation}\label{Vx-BR}
V(x,B)=\sup_{\xi\in\mathbb{R}^{n}}\frac{|A(x,\xi)-A_{B}(\xi)|}{\frac{P\left[\left(\mu^2+|\xi|^2\right)^{\frac{1}{2}}\right]}{\left(\mu^2+|\xi|^2\right)^{\frac{1}{2}}}}, 
\end{equation}
where $B\subset\Omega$ is a ball and $x\in\Omega$. Inspired by \cite{1}, we adopt an oscillation approach and use the sharp maximal function to get the following. 
\begin{proposition}\label{Osc-estimate}
Let Q(t) be an $N$-function such that $Q\circ P^{-1}$ is an $N$-function. Assume that $A$ satisfies {\rm{(A1), (A2), (A3)}} and is locally uniformly in VMO, and the $N$-function $P(t)$ satisfies {\rm{(P1)}}. There exists $\lambda > 1$ with the following property. If $x_0 \in \Omega$, then there is a number $d_0 >0$ such that if $u \in W^{1,Q}(\Omega)$ is a weak solution to \eqref{equ} for some $F\in L^Q_{\operatorname{loc}}(\Omega)$, then the estimate
\begin{equation}\label{lemma2}
\fint_{\frac{1}{2}B_d}Q(|\nabla u|)\leq C\left[Q(\mu)+\fint_{\lambda B_{d}}Q(|F|)+\frac{1}{Q(d)}\fint_{\lambda B_{d}}Q(|u|)\right]
\end{equation}
holds with $0 < d < d_0$, $B_{d}=B_{d}(x_0)$, and $\lambda B_d\subset\Omega$. 
\end{proposition}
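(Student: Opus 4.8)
The plan is to run a Caccioppoli-type argument adapted to the $N$-function setting, then absorb the gradient term coming from the coefficient oscillation and from $\mu$ using the Young-type inequality \eqref{young1} together with the monotonicity of $P(t)/t^2$, and finally close the estimate with the iteration Lemma \ref{hole}. First I would fix the ball $B_d = B_d(x_0)$ with $\lambda B_d \subset \Omega$, take two concentric radii $\frac{d}{2} \le s < t \le d$, and choose a standard cutoff $\eta \in C_0^\infty(tB)$ with $\eta \equiv 1$ on $sB$, $0 \le \eta \le 1$, and $|\nabla \eta| \le C/(t-s)$. Testing the weak formulation \eqref{weak} with $\varphi = \eta^2 (u - u_{tB})$ — here one should be slightly careful and work with a truncation or a density argument since $\varphi$ needs to be admissible, but this is routine — and adding and subtracting $A(x,0)$ as well as the averaged field $A_{tB}(\cdot)$ where convenient, I would use (A1) on the left to bound from below by
\[
\upsilon \int_{tB} \eta^2\, \frac{P\!\left[(\mu^2+|\nabla u|^2)^{1/2}\right]}{\mu^2+|\nabla u|^2}\,|\nabla u|^2,
\]
which by the monotonicity in (P1) of $P(t)/t^2$ controls $\int_{sB} Q(|\nabla u|)$ from below only after we pass from $P$ to $Q$; to handle this cleanly I would instead first prove the Caccioppoli inequality at the level of $P$ and the auxiliary quantity $V_P(\nabla u)$, and only at the end invoke that $Q \circ P^{-1}$ is an $N$-function to upgrade the integrability.

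The core of the argument is the estimate of the right-hand side terms. The contribution involving $F$ is handled by \eqref{young1}: writing the integrand as $\frac{P(|F|)}{|F|^2}F \cdot \nabla(\eta^2(u-u_{tB}))$, one splits into a piece with $\eta^2 \nabla u$, absorbed into the left side with small parameter $\epsilon$, and a piece with $2\eta \nabla\eta (u-u_{tB})$, controlled via Young's inequality by $\epsilon$ times the $V_P$-energy plus $C(\epsilon)$ times $\frac{1}{(t-s)^2}\fint P$-type terms in $u$ and a term $\fint P(|F|)$. The VMO/oscillation contribution is the delicate one: after adding and subtracting $A_{tB}(\nabla u)$, the "bad" term is $\int_{tB} |A(x,\nabla u) - A_{tB}(\nabla u)|\,|\nabla(\eta^2(u-u_{tB}))|$, which by the definition \eqref{Vx-BR} of $V(x,B)$ is bounded by $\int_{tB} V(x,tB)\, \frac{P[(\mu^2+|\nabla u|^2)^{1/2}]}{(\mu^2+|\nabla u|^2)^{1/2}}\,|\nabla(\eta^2(u-u_{tB}))|$. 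Here I would use the modified Hölder inequality \eqref{2.7} (or a direct Young splitting) to separate the factor $V(x,tB)$; since $A$ is locally uniformly VMO, $\fint_{tB} \tilde P(V(x,tB))$ can be made smaller than any prescribed $\delta$ once $d < d_0$, and this smallness is exactly what lets the resulting gradient term be absorbed into the left side. The terms involving $A(x,0)$, together with (A3), produce the $Q(\mu)$ contribution after using $P[(\mu^2)^{1/2}]/\mu \sim P(\mu)/\mu$ and $P(\mu)\lesssim Q(\mu)$-type comparisons under (P1).

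Collecting everything, I obtain an inequality of the form
\[
\fint_{sB} \bigl(\text{$P$-energy of }\nabla u\bigr) \le \tfrac{1}{2}\fint_{tB}\bigl(\text{$P$-energy of }\nabla u\bigr) + \frac{C}{(t-s)^\beta}\!\left[\,\fint_{\lambda B_d} P(|u|) + \fint_{\lambda B_d} P(|F|) + P(\mu)\,\right]
\]
for a suitable $\beta>0$ (the power $\beta$ coming from scaling of $|\nabla\eta|$ and of $P(d)$), to which Lemma \ref{hole} applies and removes the first term on the right, yielding the bound on $\fint_{\frac12 B_d}$ with the $\frac{1}{Q(d)}$ weight on the $u$-term after rescaling. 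The final step converts the $P$-level estimate into the claimed $Q$-level estimate \eqref{lemma2}: since $Q \circ P^{-1}$ is an $N$-function, Jensen's inequality applied with this convex function transfers averages of $P(\cdot)$ into averages of $Q(\cdot)$, and the comparability relations in (P1)–(P2) let us pass between $P(|\nabla u|)$ and the $V_P$-energy throughout. The main obstacle I anticipate is making the absorption of the VMO term rigorous: one must quantify the smallness of $\fint \tilde P(V(x,tB))$ uniformly in the center $x_0$ near a fixed point, track how the Young parameter $\epsilon$ and the VMO modulus $\delta$ interact so that the total coefficient of the left-hand energy stays below $\tfrac12$, and ensure the constant $\lambda>1$ (governing how much larger $\lambda B_d$ must be than $B_d$ for the reverse-Hölder/maximal-function step) is chosen independently of $d$.
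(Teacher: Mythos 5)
Your proposal has a genuine gap at the decisive step: the passage from the $P$-level Caccioppoli inequality to the claimed $Q$-level estimate. Testing with $\eta^{2}(u-u_{tB})$ and using (A1)--(A3) can only produce control of the natural energy $\fint P(|\nabla u|)$ (equivalently the $V_P$-energy), because every structure condition has $P$-growth; the content of the proposition is precisely a \emph{gain} of integrability from $P$ to an arbitrary admissible $Q$, with constants independent of $u$. Your suggested repair --- ``since $Q\circ P^{-1}$ is an $N$-function, Jensen's inequality transfers averages of $P(\cdot)$ into averages of $Q(\cdot)$'' --- goes the wrong way: convexity of $\Phi=Q\circ P^{-1}$ gives
\begin{equation*}
\Phi\!\left(\fint_{B}P(|\nabla u|)\right)\;\leq\;\fint_{B}Q(|\nabla u|),
\end{equation*}
i.e.\ it bounds the $Q$-quantity from \emph{below} by the $P$-quantity, and no reverse Jensen inequality is available. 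So your argument, carried out correctly, proves \eqref{lemma2} only with $Q$ replaced by $P$, which is the standard energy estimate and cannot serve the role this proposition plays later (it is applied in Proposition \ref{higher int} to approximating solutions exactly in order to manufacture the $Q$-integrability bound).

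The mechanism the paper uses, and which is missing from your plan, is a comparison-plus-maximal-function argument: localize via $\omega=\tilde P(\eta)u$, freeze the coefficients by solving ${\rm div}\,A_{B_R}(\nabla\upsilon)=0$ in balls $B_R(y)$, estimate $\int_{B_R}P(|\nabla\upsilon-\nabla\omega|)$ (this is where the VMO quantity $V(x,B_R)$, the $F$-term and the cutoff terms enter, much as in your Young/H\"older splittings), combine with the decay estimate for $\nabla\upsilon$ to get a pointwise oscillation bound on $\mathcal{M}^{\#}_{P}(|\nabla\omega|)$, and only then compose with $Q\circ P^{-1}$ \emph{pointwise} and invoke the Orlicz Fefferman--Stein inequality \eqref{M-Msharp} to convert this into $\int Q(|\nabla\omega|)\lesssim$ data measured in $Q$. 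The VMO smallness is used at that stage to absorb the term carrying $\int_{B_d}Q(|\nabla\omega|)$, and the hole-filling/iteration Lemma \ref{hole} (which you correctly anticipate) closes the argument. Without the frozen-coefficient comparison and the sharp maximal function (or some substitute such as a Gehring-type self-improvement), there is no route from the energy estimate to \eqref{lemma2}.
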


\begin{proof}
We are given a ball $B_d=B_{d}(x_0)$ satisfying that $\lambda B_{d}:=\left(1+\frac{2}{\delta}\right)k_0B_d\subset\Omega$, where $k_0\geq2$ and $\delta\in(0, 1)$ which is determined later. \\
\textbf{Step 1.} We first localize the problem. We take arbitrary radii $0 < \frac{d}{2}<\rho<r<d$ with balls $B_{\rho}=B_{\rho}(x_0)$ and $B_{r}=B_{r}(x_0)$. There exists a cut-off function $\eta: \mathbb{R}^n\rightarrow\mathbb{R}$ such that $\eta\in C^{\infty}_0(\mathbb{R}^n)$, $\chi_{B_{\rho}}\leq\eta\leq\chi_{B_{r}}$ and $\|\nabla\eta\|_{L^{\infty}}\leq\frac{c}{r-\rho}$ with $c>0$. By setting $\omega=\tilde{P}(\eta)\,u$, it is clear that $\omega\in W^{1, Q}_{0}(B_{r})$. It follows that
\begin{eqnarray}\label{equ1}
\operatorname{div}A(x,\nabla\omega)\!&\!=\!&\!\operatorname{div}\left[A(x,\nabla\omega)-A(x,\tilde{P}(\eta)\nabla u)\right]\nonumber\\
\!&\!\!&\!+\,\operatorname{div}\left[A(x,\tilde{P}(\eta)\nabla u)-P(\eta)A(x,\nabla u)\right]+\operatorname{div}\big[P(\eta)A(x,\nabla u)\big]\nonumber\\
\!&\!=\!&\!\operatorname{div}\left[A(x,\nabla\omega)-A(x,\tilde{P}(\eta)\nabla u)\right]+\operatorname{div}\left[A(x,\tilde{P}(\eta)\nabla u)-P(\eta)A(x,\nabla u)\right]\nonumber\\[0.1cm]
\!&\!\!&\!+\,P(\eta)\operatorname{div}A(x,\nabla u)+P'(\eta)\nabla\eta\cdot A(x,\nabla u).
\end{eqnarray}
If $y\in k_0B_d$ and $0<R<\frac{k_0d}{\delta}$ with $B_R=B_{R}(y)$, then $B_R\subset\left(1+\frac{1}{\delta}\right)\,k_0B_d\subset\Omega$. Taking $B=B_{R}$ in \eqref{equ-ABR}, one can show that $A_{B_R}(\cdot)$ satisfies hypotheses (A1), (A2) and (A3). Let $\upsilon$ be a weak solution to the following Dirichlet problem
\begin{equation}\label{equ2}
\left\{
\begin{array}{cl}
\operatorname{div}A_{B_R}(\nabla\upsilon)=0 &\text{in}\ B_R\,,\\
\upsilon=\omega &\text{on}\ \partial B_R\,. 
\end{array}
\right.
\end{equation}
By multiplying both sides of \eqref{equ1} by $\upsilon-\omega$, and applying the integration by parts, we obtain
\begin{eqnarray*}
&& \int_{B_R}A(x,\nabla\omega)\cdot(\nabla\upsilon-\nabla\omega)\\
&=&\int_{B_R}\left[A(x,\nabla\omega)-A(x,\tilde{P}(\eta)\nabla u)\right]\cdot(\nabla\upsilon-\nabla\omega)\\
&& +\ \int_{B_R}\left[A(x,\tilde{P}(\eta)\nabla u)-P(\eta)A(x,\nabla u)\right]\cdot(\nabla\upsilon-\nabla\omega)\\
&& +\ \int_{B_R}A(x,\nabla u)\cdot\nabla\big[P(\eta)(\upsilon-\omega)\big]-\int_{B_R}P'(\eta)\nabla\eta\cdot A(x,\nabla u)(\upsilon-\omega). \nonumber 
\end{eqnarray*}
Since $u$ is a weak solution to \eqref{equ}, we get
\begin{eqnarray}\label{equ3}
&&\int_{B_R}A(x,\nabla\omega)\cdot(\nabla\upsilon-\nabla\omega)\nonumber\\
&=& \int_{B_R}\left[A(x,\nabla\omega)-A(x,\tilde{P}(\eta)\nabla u)\right]\cdot(\nabla\upsilon-\nabla\omega)\nonumber\\
&&+\,\,\int_{B_R}\left[A(x,\tilde{P}(\eta)\nabla u)-P(\eta)A(x,\nabla u)\right]\cdot(\nabla\upsilon-\nabla\omega)\nonumber \\
&&+\,\,\int_{B_R}\frac{P(|F|)}{|F|^2}F\cdot\nabla\big[P(\eta)(\upsilon-\omega)\big]-\int_{B_R} P'(\eta)\nabla\eta\cdot A(x,\nabla u)(\upsilon-\omega). 
\end{eqnarray}
As $\upsilon$ solves the Dirichlet problem \eqref{equ2}, one obtains that 
\begin{equation}\label{equ15}
\int_{B_{R}}\big[A_{B_R}(\nabla\upsilon)-A_{B_R}(\nabla\omega)\big]\cdot(\nabla\upsilon-\nabla\omega)=\int_{B_R}A_{B_R}(\nabla\omega)\cdot(\nabla\omega-\nabla\upsilon). 
\end{equation}
Combining \eqref{equ3} and \eqref{equ15}, it follows that 
\begin{eqnarray*}
\!&\!\!&\! \int_{B_R}\big[A_{B_R}(\nabla\upsilon)-A_{B_R}(\nabla\omega)\big]\cdot(\nabla\upsilon-\nabla\omega)\\
\!&\!=\!&\!\int_{B_R}\big[A_{B_R}(\nabla\omega)-A(x,\nabla\omega)\big]\cdot(\nabla\omega-\nabla\upsilon) \\
\!&\!\!&\!+\,\int_{B_R}\left[A(x,\nabla\omega)-A(x,\tilde{P}(\eta)\nabla u)\right]\cdot(\nabla\omega-\nabla\upsilon)\\
\!&\!\!&\!+\,\int_{B_R}\left[A(x,\tilde{P}(\eta)\nabla u)\!-\!P(\eta)A(x,\nabla u)\right]\cdot(\nabla\omega\!-\!\nabla\upsilon)\\
\!&\!\!&\!+\int_{B_R}\frac{P(|F|)}{|F|^2}F\cdot\nabla\big[P(\eta)(\omega-\upsilon)\big]-\int_{B_R}P'(\eta)\nabla\eta\cdot A(x,\nabla u)(\omega-\upsilon). \nonumber
\end{eqnarray*}
By introducing $K_{0}$ and taking the absolute value, we have
\begin{eqnarray}\label{equ16}
K_{0}\!&\!:=\!&\! \int_{B_R}\left[A_{B_{R}}(\nabla\upsilon)-A_{B_{R}}(\nabla\omega)\right]\cdot(\nabla\upsilon-\nabla\omega)\nonumber \\
\!&\!\leq\!&\! \int_{B_R}|A_{B_{R}}(\nabla\omega)-A(x,\nabla\omega)|\,|\nabla\upsilon-\nabla\omega|\nonumber\\
\!&\!\!&\!+\,\int_{B_R}\left|A(x,\nabla\omega)-A(x,\tilde{P}(\eta)\nabla u)\right||\nabla\upsilon-\nabla\omega|\nonumber\\
\!&\!\!&\!+\,\int_{B_R}\left|A(x,\tilde{P}(\eta)\nabla u)\!-\!P(\eta)A(x,\nabla u)\right||\nabla\upsilon-\nabla\omega|\nonumber\\
\!&\!\!&\!+\,\int_{B_R}\frac{P(|F|)}{|F|}\left|\nabla\big[P(\eta)(\upsilon-\omega)\big]\right|+\int_{B_R}|P'(\eta)|\,|\nabla\eta|\,|A(x,\nabla u)|\,|(\upsilon-\omega)|\nonumber \\[0.1cm]
\!&\!=\!&\! K_1+K_2+K_3+K_4+K_5.
\end{eqnarray}
\textbf{Step 2.} We shall estimate each term $K_{i}$ ($0\leq i\leq 5$) displayed in \eqref{equ16}. By (A1) and (P1), we obtain that
\begin{equation*}
K_0\,\geq\,\upsilon\int_{B_R}\frac{P\left[\left(\mu^2+|\nabla\upsilon|^2+|\nabla\omega|^2\right)^{\frac{1}{2}}\right]}{\mu^2+|\nabla\upsilon|^2+|\nabla\omega|^2}\,|\nabla\upsilon-\nabla\omega|^2\,\geq\,\upsilon\int_{B_R}P\left(|\nabla\upsilon-\nabla\omega|\right).
\end{equation*}
By \eqref{young1} and the definition \eqref{Vx-BR} with $B=B_{R}$, we estimate $K_1$ as
\begin{eqnarray}
K_1&\leq&\int_{B_R}\left|V(x,B_R)\,\frac{P\left[\left(\mu^2+|\nabla\omega|^2\right)^{\frac{1}{2}}\right]}{\left(\mu^2+|\nabla\omega|^2\right)^{\frac{1}{2}}}\right|\,|\nabla\upsilon-\nabla\omega|\nonumber \\
&\leq&\epsilon\int_{B_R}P(|\nabla\upsilon-\nabla\omega|)\,+\,C\int_{B_R}\tilde{P}\left[V(x,B_R)\,\frac{P\left[\left(\mu^2+|\nabla\omega|^2\right)^{\frac{1}{2}}\right]}{\left(\mu^2+|\nabla\omega|^2\right)^{\frac{1}{2}}}\right]. \nonumber
\end{eqnarray}
Let $T(t)$ be an $N$-function such that $(T\circ P^{-1})(t)$ is also an $N$-function. By \eqref{2.3} and \eqref{2.7}, we obtain 
\begin{eqnarray*}
K_1&\leq&\epsilon\int_{B_R}P(|\nabla\upsilon-\nabla\omega|)+C\int_{B_R}\tilde{P}\big[V(x,B_R)\big]P\left[\left(\mu^2+|\nabla\omega|^2\right)^{\frac{1}{2}}\right]\\
&\leq&\epsilon\int_{B_R}P(|\nabla\upsilon-\nabla\omega|)+ C\left(\widetilde{T\circ P^{-1}}\right)^{-1}\left[\int_{B_R}\left(\widetilde{T\circ P^{-1}}\right)\left[\tilde{P}\big[V(x,B_R)\big]\right]\right]\\
&& \qquad\qquad\qquad\qquad\qquad \cdot\left(T\circ P^{-1}\right)^{-1}\left[\int_{B_R}\left(T\circ P^{-1}\right)P\left[\left(\mu^2+|\nabla\omega|^2\right)^{\frac{1}{2}}\right]\right]. 
\end{eqnarray*}
Then there exists an $N$-function $G(t)$ such that $(\widetilde{T\circ P^{-1}}\circ\tilde{P})(t)=G(t)\cdot t$. It follows that 
\begin{eqnarray*}
K_1\!&\!\leq\!&\!\epsilon\int_{B_R}P(|\nabla\upsilon-\nabla\omega|)\\
\!&\!\!&\!+\,C\left(\widetilde{T\circ P^{-1}}\right)^{-1}\!\left[\int_{B_R}G\big[V(x,B_R)\big]V(x,B_R)\right]\cdot\left(P\circ T^{-1}\right)\left[\int_{B_R}T\left[\left(\mu^2+|\nabla\omega|^2\right)^{\frac{1}{2}}\right]\right]\\
\!&\!\leq\!&\!\epsilon\int_{B_R}P(|\nabla\upsilon-\nabla\omega|)\\
\!&\!\!&\!+\,C\left(\widetilde{T\circ P^{-1}}\right)^{-1}\!\left[\int_{B_R}V(x,B_R)\right]\cdot\left(P\circ T^{-1}\right)\left[\int_{B_R}T\left[\left(\mu^2+|\nabla\omega|^2\right)^{\frac{1}{2}}\right]\right], \nonumber
\end{eqnarray*}
where $\epsilon>0$ will be chosen later. Here we use the fact that $V(x,B_R)$ is bounded in $\Omega$. For $K_{2}$, according to the definition of $\omega$ and (A2), we acquire 
\begin{eqnarray*}
K_2&\leq&L\int_{B_R}\frac{P\left[\left(\mu^2+|\nabla\omega|^2+\big|\tilde{P}(\eta)\nabla u\big|^2\right)^{\frac{1}{2}}\right]}{\mu^2+|\nabla\omega|^2+\big|\tilde{P}(\eta)\nabla u\big|^2}\,|\nabla\upsilon-\nabla\omega|\,\left|\nabla\omega-\tilde{P}(\eta)\nabla u\right|\\
&=&L\int_{B_R}\frac{P\left[\left(\mu^2+|\nabla\omega|^2+\big|\nabla\omega-\tilde{P}'(\eta)\nabla\eta\,u\big|^2\right)^{\frac{1}{2}}\right]}{\mu^2+|\nabla\omega|^2+\big|\nabla\omega-\tilde{P}'(\eta)\nabla\eta\,u\big|^2}\,|\nabla\upsilon-\nabla\omega|\,\left|\tilde{P}'(\eta)\nabla\eta\,u\right|. \nonumber
\end{eqnarray*}
Using the properties of $N$-function, we have
\begin{eqnarray*}
K_2&\!\leq\!&C\int_{B_R}\frac{P\left[\left(\mu^2+|\nabla\omega|^2+\big|\tilde{P}'(\eta)\nabla\eta\,u\big|^2\right)^{\frac{1}{2}}\right]}{\mu^2+|\nabla\omega|^2+\big|\tilde{P}'(\eta)\nabla\eta\,u\big|^2}\,|\nabla\upsilon-\nabla\omega|\,\left|\tilde{P}'(\eta)\nabla\eta\,u\right|\\
&\!\leq\!&C\int_{B_R}\frac{P\left[\left(\mu^2+|\nabla\omega|^2\right)^{\frac{1}{2}}\right]}{\mu^2+|\nabla\omega|^2}\,|\nabla\upsilon-\nabla\omega|\,\left|\tilde{P}'(\eta)\nabla\eta\,u\right|\\
&&+\,\,C\int_{B_R}\frac{P\left[\,\left|\tilde{P}'(\eta)\nabla\eta\,u\right|\,\right]}{\big|\tilde{P}'(\eta)\nabla\eta\,u\big|^2}\,|\nabla\upsilon-\nabla\omega|\,\left|\tilde{P}'(\eta)\nabla\eta\,u\right|.
\end{eqnarray*}
Applying \eqref{young1} twice and using the assumption on $\nabla\eta$, one deduces that
\begin{eqnarray*}
K_2 &\leq& C\int_{B_R}\frac{P\left[\left(\mu^2+|\nabla\omega|^2\right)^{\frac{1}{2}}\right]}{\mu^2+|\nabla\omega|^2}|\nabla\upsilon-\nabla\omega|\left|\tilde{P}'(\eta)\nabla\eta\,u\right|\\
&&+\,\,C\int_{B_R}\frac{P\left[\,\left|\tilde{P}'(\eta)\nabla\eta\,u\right|\,\right]}{\big|\tilde{P}'(\eta)\nabla\eta\,u\big|}|\nabla\upsilon-\nabla\omega|\\
&\leq&\epsilon\int_{B_R}P(|\nabla\upsilon-\nabla\omega|)+\sigma\int_{B_R}P(\mu+|\nabla\omega|)+\frac{C}{P(r-\rho)}\int_{B_R}P(|u|)\\
&\leq&\epsilon\int_{B_R}P(|\nabla\upsilon-\nabla\omega|)+\sigma\int_{B_R}P(|\nabla\omega|)+\frac{C}{P(r-\rho)}\int_{B_R}P(|u|)+C\, R^n\, P(\mu),
\end{eqnarray*}
where $\sigma>0$ will be chosen later. We continue to estimate $K_3$. According to the properties of $\eta$ and \eqref{young1}, it yields that
\begin{eqnarray*}
K_3&\leq&\int_{B_R\,\backslash\,B_r}\left|A(x,0)\right|\,\left|\nabla\upsilon-\nabla\omega\right|\\
&&+\,\,\int_{B_R\,\cap\,(B_r\backslash B_\rho)}\left|A(x,\tilde{P}(\eta)\nabla u)-P(\eta)A(x,\nabla u)\right|\,\left|\nabla\upsilon-\nabla\omega\right|\\
&\leq&\epsilon\int_{B_R}P(|\nabla\upsilon-\nabla\omega|)+C\int_{B_R}\tilde{P}\left(\left|A(x,0)\right|\right)\\
&&+\,\,C\int_{B_R\,\cap\,(B_r\backslash B_\rho)}\tilde{P}\left[\,\left|A(x,\tilde{P}(\eta)\nabla u)-P(\eta)A(x,\nabla u)\right|\,\right].
\end{eqnarray*}
Since $\tilde{P}$ is an $N$-function, the assumption (A3) gives us that 
\begin{eqnarray*}
K_3\!&\!\leq\!&\!\epsilon\int_{B_R}P(|\nabla\upsilon-\nabla\omega|)\,+\,C\int_{B_R}\tilde{P}\left[\frac{P(\mu)}{\mu}\right]\\
\!&\!\!&\!+\,C\int_{B_R\,\cap\,(B_r\backslash B_\rho)}\tilde{P}\left[\,\left|A(x,\tilde{P}(\eta)\nabla u)-A(x,\nabla u)\right|+\left|A(x,\nabla u)-P(\eta)A(x,\nabla u)\right|\,\right]\\
\!&\!\leq\!&\!\epsilon\int_{B_R}P(|\nabla\upsilon-\nabla\omega|)\,+\,C\,R^n\,P(\mu)\\
\!&\!\!&\!+\,\,C\int_{B_R\,\cap\,(B_r\backslash B_\rho)}\tilde{P}\left[\,\left|A(x,\tilde{P}(\eta)\nabla u)-A(x,\nabla u)\right|\,\right]\\
\!&\!\!&\!+\,\,C\int_{B_R\,\cap\,(B_r\backslash B_\rho)}\tilde{P}\left[\,\big|A(x,\nabla u)-P(\eta)A(x,\nabla u)\big|\,\right].
\end{eqnarray*}
With the help of (A2), it yields that
\begin{eqnarray*}
K_3&\leq&\epsilon\int_{B_R}P(|\nabla\upsilon-\nabla\omega|)\,+\,C\,R^n\,P(\mu)\\
&&+\,C\int_{B_R\,\cap\,(B_r\backslash B_\rho)}\tilde{P}\left[\,\frac{P\left[\left(\mu^2+|\nabla u|^2+\big|\tilde{P}(\eta)\nabla u\big|^2\right)^{\frac{1}{2}}\right]}{\mu^2+|\nabla u|^2+\big|\tilde{P}(\eta)\nabla u\big|^2}\,\left|\tilde{P}(\eta)\nabla u-\nabla u\right|\,\right]\\
&&+\,C\int_{B_R\,\cap\,(B_r\backslash B_\rho)}\tilde{P}\left[\,\big|1-P(\eta)\big|\,\frac{P\left[\left(\mu^2+|\nabla u|^2\right)^{\frac{1}{2}}\right]}{\left(\mu^2+|\nabla u|^2\right)^{\frac{1}{2}}}\,\right].
\end{eqnarray*}
Due to the definition of $N$-function and assumption (P1), we have
\begin{eqnarray*}
K_3&\leq&\epsilon\int_{B_R}P(|\nabla\upsilon-\nabla\omega|)\,+\,C\,R^n\,P(\mu)\\
&&+\,\,C\int_{B_R\,\cap\,(B_r\backslash B_\rho)}\tilde{P}\left[\,\frac{P(\mu)}{\mu}+\frac{P(|\nabla u|)}{|\nabla u|}\,\right]\\
&&+\,\,C\int_{B_R\,\cap\,(B_r\backslash B_\rho)}\tilde{P}\left[\,\frac{P\left[\,\big|\tilde{P}(\eta)\nabla u\big|\,\right]}{\big|\tilde{P}(\eta)\nabla u\big|^2}\,|\nabla u|+\frac{P\left[\left(\mu^2+|\nabla u|^2\right)^{\frac{1}{2}}\right]}{\mu^2+|\nabla u|^2}\,|\nabla u|\,\right]\\
&\leq&\epsilon\int_{B_R}P(|\nabla\upsilon-\nabla\omega|)\,+\,C\,R^n\,P(\mu)\\
&&+\,\,C\int_{B_R\,\cap\,(B_r\backslash B_\rho)}\tilde{P}\left[\frac{P(\mu)}{\mu}\right]+C\int_{B_R\,\cap\,(B_r\backslash B_\rho)}\tilde{P}\left[\frac{P(|\nabla u|)}{|\nabla u|}\right]\\
&&+\,\,C\int_{B_R\,\cap\,(B_r\backslash B_\rho)}\tilde{P}\left[\,\frac{P(|\nabla u|)}{|\nabla u|}+\frac{P\left[(\mu^2+|\nabla u|^2)^{\frac{1}{2}}\right]}{(\mu^2+|\nabla u|^2)^{\frac{1}{2}}}\,\right].
\end{eqnarray*}
By simplification, we obtain an estimate for $K_{3}$ as
\begin{eqnarray*}
K_3&\leq&\epsilon\int_{B_R}P(|\nabla\upsilon-\nabla\omega|)+C\int_{B_R}P(|\nabla u|)\,\chi_{B_r\backslash B_\rho}+C\,R^n\,P(\mu).
\end{eqnarray*}
For $K_4$, using \eqref{young1} again, one derives that 
\begin{eqnarray*}
K_4&\leq&\int_{B_R}P(\eta)\,\frac{P(|F|)}{|F|}\,|\nabla\upsilon-\nabla\omega|\,+\,\int_{B_R}P'(\eta)\,\left|\nabla\eta\right|\,\frac{P(|F|)}{|F|}\,|\upsilon-\omega|\\
&\leq&\epsilon\int_{B_R}P(|\nabla\upsilon-\nabla\omega|)\,+\,C\int_{B_R}P(|F|)+C\int_{B_R}\frac{R}{r-\rho}\,\frac{P(|F|)}{|F|}\,\frac{|\upsilon-\omega|}{R}.
\end{eqnarray*}
By the Assumption (P1) with $P(1)=1$, we get $t\leq \frac{P(t)}{t}$ if $t\geq 1$. Then we obtain 
\begin{eqnarray*}
&&\int_{B_R}\frac{R}{r-\rho}\,\frac{P(|F|)}{|F|}\,\frac{|\upsilon-\omega|}{R}\\
&\leq&\int_{B_R\,\cap\,\left\{\frac{R}{r-\rho}\,\leq\,1\right\}}\frac{P(|F|)}{|F|}\,\frac{|\upsilon-\omega|}{R}+\int_{B_R\,\cap\,\left\{\frac{R}{r-\rho}\,>\,1\right\}}\frac{P\left(\frac{R}{r-\rho}\right)}{\frac{R}{r-\rho}}\,\frac{P(|F|)}{|F|}\,\frac{|\upsilon-\omega|}{R}\\
&\leq&\epsilon\int_{B_R}P\left(\frac{|\upsilon-\omega|}{R}\right)\,+\,C\left[P\left(\frac{R}{r-\rho}\right)+1\right]\int_{B_R}P(|F|).
\end{eqnarray*}
Applying a modified version of Poincar\'{e}-Wirtinger inequality with $R<\frac{k_0d}{\delta}$, one obtains that
\begin{eqnarray*}
K_4&\leq&\epsilon\int_{B_R}P(|\nabla\upsilon-\nabla\omega|)\,+\,C\left[P\left(\frac{R}{r-\rho}\right)+1\right]\int_{B_R}P(|F|).
\end{eqnarray*}
To estimate $K_{5}$, we use the hypothesis (A3) and the property of $\nabla\eta$ to obtain that 
\begin{eqnarray*}
K_5&\leq&C\int_{B_R\,\cap\,(B_r\backslash B_\rho)}\frac{R}{r-\rho}\,\frac{P\left[\left(\mu^2+|\nabla u|^2\right)^{\frac{1}{2}}\right]}{\left(\mu^2+|\nabla u|^2\right)^{\frac{1}{2}}}\,\frac{|\upsilon-\omega|}{R}.
\end{eqnarray*}
In view of \eqref{young1} and the modified version of Poincar\'{e}-Wirtinger inequality, we acquire
\begin{eqnarray*}
K_5&\leq&C\int_{B_R\,\cap\,(B_r\backslash B_\rho)\,\cap\,\left\{\frac{R}{r-\rho}\,\leq\,1\right\}}\frac{P\left[\left(\mu^2+|\nabla u|^2\right)^{\frac{1}{2}}\right]}{\left(\mu^2+|\nabla u|^2\right)^{\frac{1}{2}}}\,\frac{|\upsilon-\omega|}{R}\\
&&+\,\,C\int_{B_R\,\cap\,(B_r\backslash B_\rho)\,\cap\,\left\{\frac{R}{r-\rho}\,>\,1\right\}}\frac{P\left(\frac{R}{r-\rho}\right)}{\frac{R}{r-\rho}}\,\frac{P\left[\left(\mu^2+|\nabla u|^2\right)^{\frac{1}{2}}\right]}{\left(\mu^2+|\nabla u|^2\right)^{\frac{1}{2}}}\,\frac{|\upsilon-\omega|}{R}\\[0.1cm]
&\leq&\epsilon\int_{B_R}P\left(\frac{|\upsilon-\omega|}{R}\right)+C\left[P\left(\frac{R}{r-\rho}\right)+1\right]\int_{B_R\,\cap\,(B_r\backslash B_\rho)}P\left[\left(\mu^2+|\nabla u|^2\right)^{\frac{1}{2}}\right]\\
&\leq&\epsilon\int_{B_R}P(|\nabla\upsilon-\nabla\omega|)\,+\,C\,P\left(\frac{R}{r-\rho}\right)\int_{B_R}P(|\nabla u|)\\
&&+\,\,C\int_{B_R}P(|\nabla u|)\,\chi_{B_r\backslash B_\rho}\,+\,C\left[P\left(\frac{R}{r-\rho}\right)+1\right]R^n\,P(\mu).
\end{eqnarray*}
Combining the estimates of $K_i$ ($0\leq i\leq 5$), we conclude that
\begin{eqnarray*}
&&\upsilon\int_{B_R}P(|\nabla\upsilon-\nabla\omega|)\\
&\leq&5\,\epsilon\int_{B_R}P(|\nabla\upsilon-\nabla\omega|)\,+\,\sigma\int_{B_R}P(|\nabla\omega|)\\
&&+\,\,C\left(\widetilde{T\circ P^{-1}}\right)^{-1}\left[\int_{B_R}V(x,B_R)\right]\cdot\left(P\circ T^{-1}\right)\left[\int_{B_R}T\left[\left(\mu^2+|\nabla\omega|^2\right)^{\frac{1}{2}}\right]\right]\\
&&+\,\,\frac{C}{P(r-\rho)}\int_{B_R}P(|u|)\,+\,C\int_{B_R}P(|\nabla u|)\,\chi_{B_r\backslash B_\rho}\,+\,C\,P\left(\frac{R}{r-\rho}\right)\int_{B_R}P(|\nabla u|)\\
&&+\,\,C\left[P\left(\frac{R}{r-\rho}\right)+1\right]\int_{B_R}P(|F|)\,+\,C\left[P\left(\frac{R}{r-\rho}\right)+1\right]R^n\,P(\mu).
\end{eqnarray*}
If we choose $\epsilon=\frac{\upsilon}{10}$, then the first term on the right hand side of inequality can be absorbed by the left hand side, and so 
\begin{eqnarray}\label{PDv-Dw}
&&\int_{B_R}P(|\nabla\upsilon-\nabla\omega|)\\
&\leq&\sigma\int_{B_R}P(|\nabla\omega|)\nonumber\\
&&+\,\,C\left(\widetilde{T\circ P^{-1}}\right)^{-1}\left[\int_{B_R}V(x,B_R)\right]\cdot\left(P\circ T^{-1}\right)\left[\int_{B_R}T\left[\left(\mu^2+|\nabla\omega|^2\right)^{\frac{1}{2}}\right]\right]\nonumber\\
&&+\,\,\frac{C}{P(r-\rho)}\int_{B_R}P(|u|)\,+\,C\int_{B_R}P(|\nabla u|)\,\chi_{B_r\backslash B_\rho}+\,C\,P\left(\frac{R}{r-\rho}\right)\int_{B_R}P(|\nabla u|)\nonumber\\
&&+\,\,C\left[P\left(\frac{R}{r-\rho}\right)+1\right]\int_{B_R}P(|F|)\,+\,C\left[P\left(\frac{R}{r-\rho}\right)+1\right]R^n\,P(\mu).\nonumber
\end{eqnarray}
\textbf{Step 3.} Considering the ball $B_{\delta R}=B_{\delta R}(y)$, it can be verified that there exists $\kappa>0$ such that
\begin{eqnarray}\label{equ4}
\fint_{B_{\delta R}}\!P(|\nabla\omega\!-\!(\nabla\omega)_{B_{\delta R}}|)\!\!&\!\!\leq\!\!&\!\! C\fint_{B_{\delta R}}\!P(|\nabla\upsilon\!-\!(\nabla\upsilon)_{B_{\delta R}}|)+C\delta^{-n}\fint_{B_{R}}\!P(|\nabla\omega\!-\!\nabla\upsilon|),\\ \label{equ5}
\fint_{B_{\delta R}}\!P(|\nabla\upsilon\!-\!(\nabla\upsilon)_{B_{\delta R}}|)\!\!&\!\!\leq\!\!&\!\! C\,P(\delta^\kappa)\fint_{B_R}\!P(\mu+|\nabla\omega|).
\end{eqnarray}
Applying \eqref{PDv-Dw}, \eqref{equ4}, and \eqref{equ5}, we continue estimating  
\begin{eqnarray*}
&&\fint_{B_{\delta R}}P(|\nabla\omega-(\nabla\omega)_{B_{\delta R}}|)\\
\!&\!\leq\!&\!C\left[P(\delta^\kappa)+\sigma\,\delta^{-n}\right]\fint_{B_R}P(|\nabla\omega|)\nonumber\\
&&+\,\,C\,\delta^{-n}\left(\widetilde{T\circ P^{-1}}\right)^{-1}\left[\fint_{B_R}V(x,B_R)\right]\cdot\left(P\circ T^{-1}\right)\left[\fint_{B_R}T\left[\left(\mu^2+|\nabla\omega|^2\right)^{\frac{1}{2}}\right]\right]\nonumber\\
&&+\,\frac{C\,\delta^{-n}}{P(r-\rho)}\fint_{B_R}P(|u|)+C\,\delta^{-n}\fint_{B_R}P(|\nabla u|)\,\chi_{B_r\backslash B_\rho}+C\,\delta^{-n}\,P\left(\frac{R}{r-\rho}\right)\fint_{B_R}P(|\nabla u|)\nonumber\\
&&+\,\,C\,\delta^{-n}\left[P\left(\frac{R}{r-\rho}\right)+1\right]\fint_{B_R}P(|F|)\nonumber\\
&&+\,\,C\,\delta^{-n}\left[P\left(\frac{R}{r-\rho}\right)+1\right]P(\mu)\,+\,C\,P(\delta^{\kappa})\,P(\mu). \nonumber 
\end{eqnarray*}
By the classical regularity theory, one gets
\begin{equation*}
\fint_{B_R}P(|\nabla u|)\,\leq\,\frac{C}{P(R)}\fint_{B_{2R}}P(|u|)\,+\,C\fint_{B_{2R}}P(|F|). 
\end{equation*}
Therefore, we have
\begin{eqnarray*}
&&\fint_{B_{\delta R}}P(|\nabla\omega-(\nabla\omega)_{B_{\delta R}}|)\\
&\leq& C\left[P(\delta^\kappa)+\sigma\,\delta^{-n}\right]\fint_{B_{R}}P(|\nabla\omega|)\\
&&+\,\,C\,\delta^{-n}\left(\widetilde{T\circ P^{-1}}\right)^{-1}\left[\fint_{B_R}V(x,B_R)\right]\cdot\left(P\circ T^{-1}\right)\left[\fint_{B_R}T\left[\left(\mu^2+|\nabla\omega|^2\right)^{\frac{1}{2}}\right]\right]\\
&&+\,\,\frac{C\,\delta^{-n}}{P(r-\rho)}\fint_{B_{2R}}P(|u|)\,+\,C\,\delta^{-n}\fint_{B_{R}}P(|\nabla u|)\,\chi_{B_r\backslash B_\rho}\\
&&+\,\,C\,\delta^{-n}\left[P\left(\frac{R}{r-\rho}\right)+1\right]\fint_{B_{2R}}P(|F|)\\
&&+\,\,C\,\delta^{-n}\left[P\left(\frac{R}{r-\rho}\right)+1\right]P(\mu)\,+\,C\,P(\delta^{\kappa})\,P(\mu).
\end{eqnarray*}
Since both $B_{2R}\subset\lambda B_{d}$, we conclude that
\begin{eqnarray*}
&&\fint_{B_{\delta R}}P(|\nabla\omega-(\nabla\omega)_{B_{\delta R}}|)\\
&\leq&CP(\delta^\kappa)\fint_{B_{R}}P(|\nabla\omega|)\\
&&+\,\,C\,\delta^{-n}\left(\widetilde{T\circ P^{-1}}\right)^{-1}\left[\fint_{B_R}V(x,B_R)\right]\cdot\left(P\circ T^{-1}\right)\left[\fint_{B_R}T\left[\left(\mu^2+|\nabla\omega|^2\right)^{\frac{1}{2}}\right]\right]\\
&&+\,\,\frac{C\,\delta^{-n}}{P(r-\rho)}\fint_{B_{2R}}P(|\,u\,\chi_{\lambda B_{d}}|)++C\,\delta^{-n}\fint_{B_{R}}P(|\nabla u|)\,\chi_{B_r\backslash B_\rho}\\
&&+\,\,C\,\delta^{-n}\left[P\left(\frac{R}{r-\rho}\right)+1\right]\fint_{B_{2R}}P(|\,F\,\chi_{\lambda B_{d}}|)\\
&&+\,\,C\,\delta^{-n}\left[P\left(\frac{R}{r-\rho}\right)+1\right]P(\mu)\,+\,C\,P(\delta^{\kappa})\,P(\mu),
\end{eqnarray*}
where we choose $\sigma=\frac{P(\delta^\kappa)}{\delta^{-n}}$. \\
\textbf{Step 4.} According to Definition \ref{sharp} and taking supremum over every $R\in(0,\frac{k_0d}{\delta})$, it yields that
\begin{eqnarray*}
P\left(\mathcal{M}_{P,k_0d}^\#(|\nabla\omega|)(y)\right)\!\!&\!\!\leq\!\!&\!\!C\,P(\delta^\kappa)P\left(\mathcal{M}_P(|\nabla\omega|)(y)\right)\\
&&+\,\,C\,\delta^{-n}\sup_{0<R<\frac{k_0d}{\delta}}\left(\widetilde{T\circ P^{-1}}\right)^{-1}\left[\fint_{B_R}V(x,B_R)\right]\cdot P\left(\mathcal{M}_T(|\nabla\omega|)(y)\right)\\
&&+\,\frac{C\,\delta^{-n}}{P(r-\rho)}P\left(\mathcal{M}_P(|\,u\,\chi_{\lambda B_{d}}|)(y)\right)+C\,\delta^{-n}P(\mathcal{M}_P(\,|\nabla u|\,\chi_{B_r\backslash B_\rho})(y))\\
&&+\,\,C\,\delta^{-n}\left[P\left(\frac{R}{r-\rho}\right)+1\right]P\left(\mathcal{M}_P(|\,F\,\chi_{\lambda B_{d}}|)(y)\right)\\
&&+\,\,C\,\delta^{-n}\left[P\left(\frac{R}{r-\rho}\right)+1\right]P(\mu)\,+\,C\,P(\delta^{\kappa})\,P(\mu). 
\end{eqnarray*}
Since $Q\circ P^{-1}$ is an $N$-function, we deduce that
\begin{eqnarray*}
&&Q\left(\mathcal{M}_{P,k_0d}^\#(|\nabla\omega|)(y)\right)\\
&=&(Q\circ P^{-1}\circ P)\left(\mathcal{M}_{P,k_0d}^\#(|\nabla\omega|)(y)\right)\\[0.1cm]
&\leq& C\,Q(\delta^\kappa)\,Q\left(\mathcal{M}_P(|\nabla\omega|)(y)\right)\\
&&+\,\,C\,\delta^{-n}\sup_{0<R<\frac{k_0d}{\delta}}\left(\widetilde{T\circ P^{-1}}\right)^{-1}\left[\fint_{B_R}V(x,B_R)\right]\cdot Q\left(\mathcal{M}_T(|\nabla\omega|)(y)\right)\\
&&+\,\,\frac{C\,\delta^{-n}}{Q(r-\rho)}Q\left(\mathcal{M}_P(|\,u\,\chi_{\lambda B_{d}}|)(y)\right)+C\,\delta^{-n}Q(\mathcal{M}_P(\,|\nabla u|\,\chi_{B_r\backslash B_\rho})(y))\\
&&+\,\,C\,\delta^{-n}\left[Q\left(\frac{R}{r-\rho}\right)+1\right]Q\left(\mathcal{M}_P(|\,F\,\chi_{\lambda B_{d}}|)(y)\right)\\
&&+\,\,C\,\delta^{-n}\left[Q\left(\frac{R}{r-\rho}\right)+1\right]Q(\mu)\,+\,C\,Q(\delta^{\kappa})\,Q(\mu).
\end{eqnarray*}
Integrating with respect to $y$ over $k_0B_d$, we acquire that
\begin{eqnarray*}
&&\int_{k_0B_d}Q\left(\mathcal{M}_{P,k_0d}^\#(|\nabla\omega|)\right)\\
&\leq&C\,Q(\delta^\kappa)\int_{\mathbb{R}^n}Q\left(\mathcal{M}_P(|\nabla\omega|)\right)\\
&&\,\,C\,\delta^{-n}\sup_{y\in k_0B_d}\,\,\sup_{0<R<\frac{k_0d}{\delta}}\left(\widetilde{T\circ P^{-1}}\right)^{-1}\left[\fint_{B_R}V(x,B_R)\right]\cdot\int_{\mathbb{R}^n}Q\left(\mathcal{M}_T(|\nabla\omega|)\right)\\
&&+\,\,\frac{C\,\delta^{-n}}{Q(r-\rho)}\int_{\mathbb{R}^n}Q\left(\mathcal{M}_P(|\,u\,\chi_{\lambda B_d}|)\right)+C\,\delta^{-n}\int_{\mathbb{R}^n}Q\left(\mathcal{M}_P(\,|\nabla u|\,\chi_{B_r\backslash B_\rho})\right)\\
&&+\,\,C\,\delta^{-n}\left[Q\left(\frac{R}{r-\rho}\right)+1\right]\int_{\mathbb{R}^n}Q\left(\mathcal{M}_P(|\,F\,\chi_{\lambda B_{d}}|)\right)\\
&&+\,\,C\,|k_0B_d|\,\delta^{-n}\left[Q\left(\frac{R}{r-\rho}\right)+1\right]Q(\mu)\,+\,C\,|k_0B_d|\,Q(\delta^{\kappa})\,Q(\mu).
\end{eqnarray*}
By \eqref{M-Msharp}, it follows that 
\begin{eqnarray}\label{equ7}
\int_{B_d}Q(|\nabla\omega|)&\!\!\leq\!\!&C\int_{k_0B_d}Q\left(\mathcal{M}_{P,k_0d}^\#(|\nabla\omega|)\right)\nonumber\\
&\leq&C\,Q(\delta^\kappa)\int_{B_d}Q(|\nabla\omega|) \nonumber \\
&&+\,\,C\,\delta^{-n}\sup_{y\in k_0B_d}\,\,\sup_{0<R<\frac{k_0d}{\delta}}\left(\widetilde{T\circ P^{-1}}\right)^{-1}\left[\fint_{B_R}V(x,B_R)\right]\cdot\int_{B_d}Q(|\nabla\omega|)\nonumber\\
&&+\,\,\frac{C\,\delta^{-n}}{Q(r-\rho)}\int_{\mathbb{R}^n}Q(|\,u\,\chi_{\lambda B_d}|)+C\,\delta^{-n}\int_{\mathbb{R}^n}Q(\,|\nabla u|\,\chi_{B_r\backslash B_\rho})\nonumber\\
&&+\,\,C\,\delta^{-n}\left[Q\left(\frac{R}{r-\rho}\right)+1\right]\int_{\mathbb{R}^n}Q(|\,F\,\chi_{\lambda B_{d}}|)\nonumber\\
&&+\,\,C\,|k_0B_d|\,\delta^{-n}\left[Q\left(\frac{R}{r-\rho}\right)+1\right]Q(\mu)\,+\,C\,|k_0B_d|\,Q(\delta^{\kappa})\,Q(\mu).
\end{eqnarray}
Our aim consists of inserting the two terms involving $\nabla\omega$ on the right hand side into the term on the left hand side, by making relevant coefficients as small as possible. We choose $\delta$ such that
\begin{equation*}
C\,Q(\delta^\kappa)=\frac{1}{4}\ \Longleftrightarrow\ \delta=\left[Q^{-1}\left(\frac{1}{4C}\right)\right]^{\frac{1}{\kappa}}.
\end{equation*}
Note that this choice of $\delta>0$ is independent of $d$. Therefore, \eqref{equ7} reads as 
\begin{eqnarray}\label{equ17}
\int_{B_d}Q(|\nabla\omega|) &\leq& C\sup_{y\in k_0B_d}\,\,\sup_{0<R<\frac{k_0d}{\delta}}\left(\widetilde{T\circ P^{-1}}\right)^{-1}\left[\fint_{B_R}V(x,B_R)\right]\cdot\int_{B_d}Q(|\nabla\omega|)\nonumber\\
&&+\,\,\frac{C}{Q(r-\rho)}\int_{\mathbb{R}^n}Q(|\,u\,\chi_{\lambda B_d}|)+C\int_{\mathbb{R}^n}Q(\,|\nabla u|\,\chi_{B_r\backslash B_\rho})\nonumber\\
&&+\,\,C\left[Q\left(\frac{k_0d}{r-\rho}\right)+1\right]\int_{\mathbb{R}^n}Q(|\,F\,\chi_{\lambda B_{d}}|) \nonumber \\
&&+\,\,C\,|k_0B_d|\left[Q\left(\frac{k_0d}{r-\rho}\right)+1\right]Q(\mu).
\end{eqnarray}
Moreover, if $k_0d<\frac{{\operatorname{dist}}(x_0,\,\partial\Omega)}{2}$, then we get that
\begin{eqnarray*}
&&\sup_{y\in k_0B_d}\,\,\sup_{0<R<\frac{k_0d_0}{\delta}}\left(\widetilde{T\circ P^{-1}}\right)^{-1}\left[\fint_{B_R}V(x,B_{R})\right]\\
&\leq&\sup_{y\in B_{\frac{{\operatorname{dist}}(x_0,\,\Omega)}{2}}(x_0)}\,\,\sup_{0<R<\frac{k_0d_0}{\delta}}\left(\widetilde{T\circ P^{-1}}\right)^{-1}\left[\fint_{B_R}V(x,B_R)\right],
\end{eqnarray*}
and hence 
\begin{equation}
\lim_{d\rightarrow0}\,\,\sup_{y\in B_{\frac{{\operatorname{dist}}(x_0,\,\partial\Omega)}{2}}(x_0)}\,\,\sup_{0<R<\frac{k_0d_0}{\delta}}\left(\widetilde{T\circ P^{-1}}\right)^{-1}\left[\fint_{B_R}V(x,B_R)\right]=0.\nonumber
\end{equation}
In fact, $d>0$ can be selected small enough such that
\begin{equation}
C\sup_{y\in k_0B_d}\,\,\sup_{0<R<\frac{k_0d_0}{\delta}}\left(\widetilde{T\circ P^{-1}}\right)^{-1}\left[\fint_{B_R}V(x,B_R)\right]<\frac{1}{4}.\nonumber
\end{equation}
This allows us to insert the remaining term involving $\nabla\omega$ into the left hand side of \eqref{equ17}. It follows that 
\begin{eqnarray*}
\int_{B_d}Q(|\nabla\omega|)&\leq&\frac{C}{Q(r-\rho)}\int_{\lambda B_d}Q(|u|)+C\int_{B_r\backslash B_\rho}Q(|\nabla u|)\\
&&+\,\,C\left[Q\left(\frac{k_0d}{r-\rho}\right)+1\right]\int_{\lambda B_d}Q(|F|)\\
&&+\,\,C\,|\lambda B_d|\left[Q\left(\frac{k_0d}{r-\rho}\right)+1\right]Q(\mu).
\end{eqnarray*}
Since $\omega=\tilde{P}(\eta)u$ with $\chi_{B_{\rho}}\leq\eta\leq\chi_{B_r}$ and $\rho<d$, then we obtain 
\begin{eqnarray*}
\int_{B_{\rho}}Q(|\nabla u|)\!&\!\leq&\!\!C\int_{B_d}Q(|\nabla\omega|)\\
&\!\leq\!&\!\frac{C}{Q(r-\rho)}\int_{\lambda B_d}Q(|u|)+C'\int_{B_r\backslash B_\rho}Q(|\nabla u|)\\
&&+\,C\left[Q\left(\frac{d}{r-\rho}\right)+1\right]\int_{\lambda B_d}Q(|F|)+C\,|\lambda B_d|\left[Q\left(\frac{d}{r-\rho}\right)+1\right]Q(\mu).
\end{eqnarray*}
Filling the hole, that is, adding to both sides of previous inequality the quantity $C'\int_{B_\rho}Q(|\nabla u|)$, we acquire
\begin{eqnarray*}
\int_{B_\rho}Q(|\nabla u|)&\leq&\nu\int_{B_r}Q(|\nabla u|)+\frac{C}{Q(r-\rho)}\int_{\lambda B_d}Q(|u|)\\
&&+\,\,C\left[Q\left(\frac{d}{r-\rho}\right)+1\right]\int_{\lambda B_d}Q(|F|)+C\,|\lambda B_d|\left[Q\left(\frac{d}{r-\rho}\right)+1\right]Q(\mu),
\end{eqnarray*}
where $0<\nu<1$. Via the choice of $d$ and $\rho$, one applies Lemma \ref{hole} to derive that 
\begin{eqnarray}
\int_{B_\frac{d}{2}}Q(|\nabla u|)&\leq&\frac{C}{Q(r-\rho)}\int_{\lambda B_d}Q(|u|)+C\left[Q\left(\frac{d}{r-\rho}\right)+1\right]\int_{\lambda B_d}Q(|F|)\nonumber\\
&&+\,\,C\,|\lambda B_d|\left[Q\left(\frac{d}{r-\rho}\right)+1\right]Q(\mu).
\end{eqnarray}
Since above estimates are valid for arbitrary radii $\frac{d}{2}<\rho<r<d$, we conclude that
\begin{equation}
\fint_{\frac{1}{2}B_d}Q(|\nabla u|)\leq C\left[Q(\mu)+\fint_{\lambda B_{d}}Q(|F|)+\frac{1}{Q(d)}\fint_{\lambda B_{d}}Q(|u|)\right].\nonumber
\end{equation}
This completes the proof of Proposition \ref{Osc-estimate}. 
\end{proof}

\subsection{Higher Integrability}
The higher integrability estimate for Laplace and $p$-Laplace equation is well known (see $\cite{9}$ and $\cite{10}$). In this section, we shall prove the following proposition.  
\begin{proposition}\label{higher int}
Assume that {\rm{(A1)}-\rm{(A3)}}, {\rm{(P1)}} hold, and $x\mapsto A(x,\xi)$ is locally uniformly in VMO. If u is a weak solution to \eqref{equ} with $F\in L^Q_{{\rm{loc}}}(\Omega)$, then $\nabla u\in L^Q_{{\rm{loc}}}(\Omega)$, where $Q(t)$ is an $N$-function satisfying that $Q\circ P^{-1}$ is also an $N$-function. Moreover, there exists a constant $\lambda>1$ such that 
\begin{equation}\label{equ8}
\fint_BQ(|\nabla u|)\leq C\left(1+\fint_{\lambda B}Q(|u|)+\fint_{\lambda B}Q(|F|)\right)
\end{equation}
holds for any ball B such that $\lambda B\subset\Omega$. 
\end{proposition}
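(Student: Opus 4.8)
The plan is to derive Proposition \ref{higher int} from the oscillation estimate of Proposition \ref{Osc-estimate} by an approximation in the $N$-function $Q$. Note first that Proposition \ref{Osc-estimate} already yields an estimate of the shape \eqref{equ8} for solutions that happen to lie in $W^{1,Q}_{\rm loc}(\Omega)$: the hypotheses {\rm(A1)}--{\rm(A3)}, {\rm(P1)} and the local uniform VMO condition needed there are precisely the standing hypotheses of Proposition \ref{higher int}. The only genuine task is therefore to remove the a priori requirement $\nabla u\in L^Q_{\rm loc}(\Omega)$; for this we use only that $u\in W^{1,P}_{\rm loc}(\Omega)$, i.e. $P(|\nabla u|)\in L^1_{\rm loc}(\Omega)$. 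I also fix, once and for all, a ball $B$ with $\lambda B\subset\Omega$ and assume the right-hand side of \eqref{equ8} is finite (there is nothing to prove otherwise); here $\fint_{\lambda B}Q(|F|)<\infty$ is the hypothesis $F\in L^Q_{\rm loc}(\Omega)$, while in the cases relevant to Theorems \ref{theorem1.3}--\ref{MainTheorem} the finiteness of $\fint_{\lambda B}Q(|u|)$ is guaranteed by the Orlicz--Sobolev embedding.

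First I would build the approximating $N$-functions. Set $R:=Q\circ P^{-1}$, an $N$-function by hypothesis, and let $(R_m)_{m\in\mathbb{N}}$ be suitably regularized truncations of $R$, e.g.\ $R_m(s)=\int_0^s\min\{R'(\tau),R'(m)\}\operatorname{d}\!\tau$ modified near $\tau=m$ so as to keep $R_m''>0$; then $R_m\le R$, $R_m\uparrow R$ pointwise, $R_m$ is increasing in $m$, $R_m(s)\le C_m(1+s)$, and $\Delta_2(R_m)$, $\Delta_2(\widetilde{R_m})$ are bounded uniformly in $m$. Put $Q_m:=R_m\circ P$. Being a composition of convex increasing functions (with $R_m$ regularized as above), $Q_m$ is an $N$-function with $Q_m(1)=R_m(1)=1$; moreover $Q_m\circ P^{-1}=R_m$ is an $N$-function, $Q_m\le Q$, $Q_m\uparrow Q$, the $\Delta_2$-data of $Q_m$ and $\widetilde{Q_m}$ are uniform in $m$, and $Q_m(t)\le C_m(1+P(t))$. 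Consequently $Q_m(|\nabla u|)\le C_m(1+P(|\nabla u|))\in L^1_{\rm loc}(\Omega)$ and likewise $Q_m(|u|)\in L^1_{\rm loc}(\Omega)$, so $u\in W^{1,Q_m}_{\rm loc}(\Omega)$ for every $m$, while $F\in L^Q_{\rm loc}(\Omega)\subset L^{Q_m}_{\rm loc}(\Omega)$.

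Next I would apply Proposition \ref{Osc-estimate} with $Q_m$ in place of $Q$, on a ball $\Omega_0\Subset\Omega$ containing the relevant enlargement of $B$ (where $u\in W^{1,Q_m}(\Omega_0)$). Choosing $d>0$ comparable to the radius of $B$ (subdividing $B$ into finitely many smaller balls if necessary so that $d<d_0$), so that $B\subset\tfrac12 B_d$ and $\lambda' B_d\subset\Omega$ for the enlargement factor $\lambda'$ supplied by Proposition \ref{Osc-estimate}, estimate \eqref{lemma2} gives
\begin{equation*}
\fint_B Q_m(|\nabla u|)\le C\Big[Q_m(\mu)+\fint_{\lambda' B_d}Q_m(|F|)+\frac{1}{Q_m(d)}\fint_{\lambda' B_d}Q_m(|u|)\Big].
\end{equation*}
Since $0<\mu<1$ gives $Q_m(\mu)\le Q_m(1)=1$, since $Q_m\le Q$, and since $Q_m(d)\ge Q_1(d)>0$, the right-hand side is at most $C\big(1+\fint_{\lambda' B_d}Q(|F|)+\tfrac{1}{Q_1(d)}\fint_{\lambda' B_d}Q(|u|)\big)$, a bound that does not depend on $m$. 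Letting $m\to\infty$ and using the monotone convergence theorem ($0\le Q_m(|\nabla u|)\uparrow Q(|\nabla u|)$ a.e.) yields $\fint_B Q(|\nabla u|)\le C\big(1+\fint_{\lambda B}Q(|u|)+\fint_{\lambda B}Q(|F|)\big)$ after relabelling $\lambda$ and $C$, which is \eqref{equ8}; in particular $\nabla u\in L^Q(B)$, and since $B$ was an arbitrary ball with $\lambda B\subset\Omega$, a covering argument gives $\nabla u\in L^Q_{\rm loc}(\Omega)$.

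I expect the main obstacle to be the uniformity in $m$ of the constant $C$ produced by Proposition \ref{Osc-estimate}. This forces one to re-examine its proof and verify that the constant there depends on the $N$-function only through $\Delta_2(Q_m)$, $\Delta_2(\widetilde{Q_m})$, $\Delta_2(Q_m\circ P^{-1})=\Delta_2(R_m)$ and the normalization $Q_m(1)=1$ --- all uniform by construction --- together with the structural quantities $n,\upsilon,L,k,\mu$, the constants in {\rm(P1)}, and $\Delta_2(P),\Delta_2(\widetilde P)$; one must also check that the auxiliary parameters $\delta$ and $\kappa$ and the auxiliary $N$-function appearing in that proof can be chosen uniformly in $m$ (for $\delta$ this uses $Q^{-1}\le Q_m^{-1}\le Q_1^{-1}$, so $\delta$ stays in a fixed compact subinterval of $(0,1)$). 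A secondary, routine point is the regularization making each $R_m$ a genuine $N$-function without spoiling $R_m\le R$, the monotonicity $R_m\uparrow R$, the linear growth bound $R_m(s)\le C_m(1+s)$, or the uniform $\Delta_2$-constants; equivalently, one may run the whole argument with Young functions satisfying $\Delta_2$, for which the proof of Proposition \ref{Osc-estimate} applies verbatim.
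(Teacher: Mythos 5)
Your route is genuinely different from the paper's. The paper does not approximate the $N$-function at all: it mollifies the vector field and the datum ($A_\epsilon=A*\phi_\epsilon$, $F_\epsilon=F*\phi_\epsilon$), solves the auxiliary Dirichlet problems \eqref{equ9} whose solutions $u_\epsilon$ are in $W^{1,Q}$ by classical theory (so Proposition \ref{Osc-estimate} is legitimately applicable to $u_\epsilon$), proves $u_\epsilon\to u$ strongly in $W^{1,P}(B_R)$ by an energy comparison, and then --- crucially --- iterates the oscillation estimate along a chain of intermediate $N$-functions $Q_0=Q,\dots,Q_h$ with $L^P\subset L^{Q_h}$, using Orlicz--Sobolev embeddings at each step, so that the $u_\epsilon$-term on the right is measured in $Q_h$ and is therefore controlled by the $W^{1,P}$ norm that actually converges. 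Your truncation-in-$Q$ scheme bypasses the mollification and the limit in $\epsilon$, which is attractive, but it has two genuine gaps. First, the entire weight of the argument is shifted onto the claim that the constant in Proposition \ref{Osc-estimate} is uniform over the family $Q_m$; you flag this as ``the main obstacle'' but do not discharge it, and it is not routine here: the proof of Proposition \ref{Osc-estimate} applies $Q\circ P^{-1}$ to sums, products and quotients (e.g.\ converting $\tfrac{1}{P(r-\rho)}P(\cdot)$ into $\tfrac{1}{Q(r-\rho)}Q(\cdot)$), uses modular maximal-function and sharp-function inequalities \eqref{M-Msharp} at the $Q$-level, and chooses $\delta$, $d_0$ through $Q$; all of these constants must be re-derived for $R_m=Q_m\circ P^{-1}$. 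Moreover your specific truncation is defective as stated: an asymptotically linear $R_m$ is not an $N$-function in the paper's sense (Definition 2.1 requires $R_m''>0$ and superlinearity), its conjugate $\widetilde{R_m}$ is not even finite beyond $s=\sup R_m'$, so ``$\Delta_2(\widetilde{R_m})$ bounded uniformly in $m$'' is false and would have to be replaced by a statement about $\widetilde{Q_m}$ only, with proof.

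Second, your argument proves a conditional statement: you obtain \eqref{equ8}, and hence $\nabla u\in L^Q(B)$, only on balls where $\fint_{\lambda B}Q(|u|)<\infty$. The proposition asserts $\nabla u\in L^Q_{\rm loc}(\Omega)$ with no such hypothesis, and the paper's proof delivers exactly this: the iteration over $Q_i$ replaces $\int Q(|u_\epsilon|)$ by $\int Q_h(|u_\epsilon|)$ with $L^P\subset L^{Q_h}$, which is finite (and stable under the limit) for any $u\in W^{1,P}_{\rm loc}$. Your appeal to the Orlicz--Sobolev embedding ``in the cases relevant to Theorems \ref{theorem1.3}--\ref{MainTheorem}'' is plausible for power-type $P$ (where $2\alpha<2\le$ the growth exponent forced by (P1)), but it is not proved, and in any case it establishes a weaker result than the one stated. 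To close this gap within your scheme you would have to import the same Sobolev bootstrap the paper uses (estimate $\fint Q_m(|u|)$ through $\fint Q_{m}'(|u|)+\fint Q_m'(|\nabla u|)$ for weaker $Q_m'$ and iterate down to the $P$-scale), at which point the two proofs largely converge.
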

\begin{proof}
We fix a ball $B_R(x_0)\Subset\Omega$ with $0<R<\lambda\,d_0$ where $\lambda$ and $d_0$ are the ones determined in Proposition \ref{Osc-estimate}. By taking a smooth $\phi\in C_0^\infty(B_1(0))$ with $\phi\geq0$ and$\int_{B_1(0)}\phi=1$, and considering the mollifiers $(\phi_\epsilon)_{\epsilon>0}$, we let
\begin{equation}
A_\epsilon(x,\xi)=A(\cdot, \xi)\ast\phi_\epsilon(x)=\int_{B_1}\phi(y)\,A(x-\epsilon y,\xi)\operatorname{d}\! y \nonumber
\end{equation}
and $F_\epsilon=F\ast\phi_\epsilon$, where $0<\epsilon<\operatorname{dist}(B_R,\Omega)$. Then the assumptions (A1)-(A3) imply that 
\begin{eqnarray*}
&\textrm{(H1)}&\left[A_\epsilon(x,\xi)-A_\epsilon(x,\eta)\right]\cdot(\xi-\eta)\geq\upsilon\frac{P\left[\left(\mu^2+|\xi|^2+|\eta|^2\right)^{\frac{1}{2}}\right]}{\mu^2+|\xi|^2+|\eta|^2}|\xi-\eta|^2\,,\\
&\textrm{(H2)}&\left|A_\epsilon(x,\xi)-A_\epsilon(x,\eta)\right|\leq L\frac{P\left[\left(\mu^2+|\xi|^{2}+|\eta|^2\right)^{\frac{1}{2}}\right]}{\mu^2+|\xi|^{2}+|\eta|^{2}}|\xi-\eta|\,,\\
&\textrm{(H3)}&|A_\epsilon(x,\xi)|\leq k\frac{P\left[\left(\mu^2+|\xi|^2\right)^{\frac{1}{2}}\right]}{\left(\mu^2+|\xi|^2\right)^{\frac{1}{2}}}
\end{eqnarray*}
for almost every $x\in\Omega$ and for all $\xi$, $\eta\in\mathbb{R}^n$. Furthermore, we let
\begin{equation}
V_\epsilon(x,B)=\sup_{\xi\in\mathbb{R}^{n}}\frac{|A_\epsilon(x,\xi)-A_{\epsilon,B}(\xi)|}{\frac{P\left[\left(\mu^2+|\xi|^2\right)^{\frac{1}{2}}\right]}{\left(\mu^2+|\xi|^2\right)^{\frac{1}{2}}}}, \nonumber
\end{equation}
where
\begin{equation}
A_{\epsilon,B}(\xi)=\fint_{B}A_\epsilon(y,\xi)\operatorname{d}\!y. \nonumber
\end{equation}
Because $x\rightarrow A_\epsilon(x,\xi)$ is $C^\infty$ smooth, we have that
\begin{equation}
\textrm{(H4)}\quad \lim_{r\rightarrow0}\,\,\sup_{r(B)<r}\,\,\sup_{c(B)\in K\subset\Omega}\,\fint_B V_\epsilon(x,B)\operatorname{d}\!x=0.\nonumber
\end{equation}
In order to complete the proof of this proposition, we claim that, if $A_\epsilon(x,\nabla u)\in L^{\tilde{P}}(B_R)$ and $F\in L^Q(B_R)$, then 
\begin{eqnarray*}
A_\epsilon(x,\nabla u)&\rightarrow& A(x,\nabla u)\quad\text{strongly\ in\ } L^{\tilde{P}}(B_R),\\
F_\epsilon&\rightarrow& F\quad\text{strongly\ in\ } L^{Q}(B_R). 
\end{eqnarray*}
Let $u$ $\in W^{1,P}_{\text{loc}}(\Omega)$ be a weak solution to \eqref{equ}, and $u_\epsilon\in W^{1,P}(B_R)$ a weak solution to the following Dirichlet problem
\begin{equation}\label{equ9}
\left\{
\begin{aligned}
&\operatorname{div}A_\epsilon(x,\nabla u_\epsilon)=\operatorname{div}\left[\frac{P(|F_\epsilon|)}{|F_\epsilon|^2}F_\epsilon\right]\ &\text{in}\ B_{R},\\
&u_\epsilon=u\ &\text{on}\ \partial B_{R}.
\end{aligned}
\right.
\end{equation}
By the classical theory, one derives that $\nabla u_\epsilon\in L^Q(B_{R})$. By taking $\phi=u_\epsilon-u$ as a test function in \eqref{equ9} and in \eqref{equ}, we obtain that 
\begin{equation}
\int_{B_R}A_\epsilon(x,\nabla u_\epsilon)\cdot(\nabla u-\nabla u_\epsilon)=\int_{B_R}\frac{P(|F_\epsilon|)}{|F_\epsilon|^2}F_\epsilon\cdot(\nabla u-\nabla u_\epsilon)\nonumber
\end{equation}
and
\begin{equation}
\int_{B_R}A(x,\nabla u)\cdot(\nabla u-\nabla u_\epsilon)=\int_{B_R}\frac{P(|F|)}{|F|^2}F\cdot(\nabla u-\nabla u_\epsilon).\nonumber
\end{equation}
By subtraction, we get that 
\begin{eqnarray*}
\int_{B_R}\left[A_\epsilon(x,\nabla u_\epsilon)-A(x,\nabla u)\right]\cdot(\nabla u-\nabla u_\epsilon)=\int_{B_R}\left[\frac{P(|F_\epsilon|)}{|F_\epsilon|^2}F_\epsilon-\frac{P(|F|)}{|F|^2}F\right]\cdot(\nabla u-\nabla u_\epsilon). 
\end{eqnarray*}
By using (H1), it yields that 
\begin{eqnarray*}
&&\upsilon\int_{B_R}\frac{P\left[\left(\mu^2+|\nabla u|^2+|\nabla u_\epsilon|^2\right)^{\frac{1}{2}}\right]}{\mu^2+|\nabla u|^2+|\nabla u_\epsilon|^2}|\nabla u-\nabla u_\epsilon|^2\\
&\leq&\int_{B_R}\left[A(x,\nabla u)-A_\epsilon(x,\nabla u)\right]\cdot(\nabla u-\nabla u_\epsilon)\\
&&+\,\,\int_{B_R}\left[\frac{P(|F_\epsilon|)}{|F_\epsilon|^2}F_\epsilon-\frac{P(|F|)}{|F|^2}F\right]\cdot(\nabla u-\nabla u_\epsilon). 
\end{eqnarray*}
According to the modified version of H\"{o}lder inequality, it is easy to get
\begin{eqnarray}\label{equ18}
&&\upsilon\int_{B_R}\frac{P\left[\left(\mu^2+|\nabla u|^2+|\nabla u_\epsilon|^2\right)^{\frac{1}{2}}\right]}{\mu^2+|\nabla u|^2+|\nabla u_\epsilon|^2}|\nabla u-\nabla u_\epsilon|^2\nonumber\\
&\!\!\leq\!\!&\tilde{P}^{-1}\left[\int_{B_R}\tilde{P}\left(|A(x,\nabla u)-A_\epsilon(x,\nabla u)|\right)\right]\cdot P^{-1}\left[\int_{B_R}P(|\nabla u-\nabla u_\epsilon|)\right]\nonumber\\
&&+\,\tilde{P}^{-1}\left[\int_{B_R}\tilde{P}\left(\left|\frac{P(|F_\epsilon|)}{|F_\epsilon|^2}F_\epsilon-\frac{P(|F|)}{|F|^2}F\right|\right)\right]\cdot P^{-1}\left[\int_{B_R}P(|\nabla u-\nabla u_\epsilon|)\right]. \nonumber 
\end{eqnarray}
With the help of \eqref{young1}, we deduce that 
\begin{equation*}
\int_{B_R}P(|\nabla u-\nabla u_\epsilon|)\leq C\int_{B_R}\tilde{P}(|A(x,\nabla u)-A_\epsilon(x,\nabla u)|)+\int_{B_R}\tilde{P}\left(\left|\frac{P(|F_\epsilon|)}{|F_\epsilon|^2}F_\epsilon-\frac{P(|F|)}{|F|^2}F\right|\right). 
\end{equation*}
Taking $\epsilon\rightarrow0$, we get that $u_\epsilon$ converges strongly to $u$ in $W^{1,P}(B_{R})$. Since $A_\epsilon$ satisfies assumptions (H1)-(H4) and $\nabla u_\epsilon\in L^Q(B_{R})$, we use the estimate of Proposition \ref{Osc-estimate} for $u_\epsilon$ and acquire
\begin{equation}\label{equ10}
\int_{B_{\rho}}Q(|\nabla u_\epsilon|)\leq C\left(Q(\mu)\,|B_{\lambda\rho}|+\frac{1}{Q(\lambda\rho)}\int_{B_{\lambda\rho}}Q(|u_\epsilon|)+\int_{B_{\lambda\rho}}Q(|F_\epsilon|)\right)
\end{equation}
for every $\rho>0$ such that $B_{\lambda\rho}\subset B_R$. Then there exist a sequence of $N$-functions $\{Q_{i}\}_{i\in\mathbb{N}}$ satisfying that  
\begin{equation}
\left\{
\begin{aligned}
&Q_0=Q\,,\\
&Q_i=\left(Q_i\circ Q^{-1}_{i+1}\right)\circ Q_{i+1}\,,
&(i\in\mathbb{N})\nonumber
\end{aligned}
\right.
\end{equation}
where $Q_i\circ Q^{-1}_{i+1}$ is also an $N$-functions. We note that there exists $h\in\mathbb{N}$ such that $L^P\subset L^{Q_h}$. We choose $\rho$ small such that $\lambda^h\rho<R$ and let $r_i=\lambda^i\rho$. Since $F\in L^Q(B_R)$, we obtain $F\in L^{Q_i}(B_R)$ for every $i\in\mathbb{N}$, and so we estimate \eqref{equ10} as
\begin{eqnarray}\label{equ19}
\int_{B_{r_i}}Q_i(|\nabla u_\epsilon|)\!&\!\leq\!&\!\frac{C}{Q_i(r_{i+1})}\int_{B_{r_{i+1}}}Q_i(|u_\epsilon|)+C\int_{B_{r_{i+1}}}Q_i(|F_\epsilon|)+C\,Q_i(\mu)\,|B_{r_{i+1}}|\nonumber\\
\!&\!\leq\!&\!\frac{C}{Q_i(r_{i+1})}\int_{B_{r_{i+1}}}Q_{i+1}(|u_\epsilon|)+\frac{C}{Q_i(r_{i+1})}\int_{B_{r_{i+1}}}Q_{i+1}(|\nabla u_\epsilon|)\nonumber\\
\!&\!\!&\!+\,C\int_{B_{r_{i+1}}}Q_i(|F_\epsilon|)+C\,Q_i(\mu)\,|B_{r_{i+1}}|\nonumber\\
\!&\!\leq\!&\!\frac{C}{Q_i(r_{i+1})}\int_{B_{r_{i+1}}}Q_{i+1}(|u_\epsilon|)+C\int_{B_{r_{i+1}}}Q_i(|F_\epsilon|)+C\,Q_i(\mu)\,|B_{r_{i+1}}|\nonumber\\
\!&\!\!&\!+\,\frac{C}{Q_i(r_{i+1})\,Q_{i+1}(r_{i+2})}\int_{B_{r_{i+2}}}Q_{i+1}(|u_\epsilon|)+\frac{C}{Q_i(r_{i+1})}\int_{B_{r_{i+2}}}Q_{i+1}(|F_\epsilon|)\nonumber\\
\!&\!\!&\!+\,\frac{C}{Q_i(r_{i+1})}\,Q_{i+1}(\mu)\,|B_{r_{i+2}}| \nonumber \\
\!&\!\leq\!&\!\frac{C}{Q_i(r_{i+1})\,Q_{i+1}(r_{i+2})}\int_{B_{r_{i+2}}}Q_{i+1}(|u_\epsilon|)+\frac{C}{Q_i(r_{i+1})}\int_{B_{r_{i+2}}}Q_{i+1}(|F_\epsilon|)\nonumber\\
\!&\!\!&\!+\,C\int_{B_{r_{i+1}}}Q_i(|F_\epsilon|)+C\,Q_i(\mu)\,|B_{r_{i+1}}|+\frac{C}{Q_i(r_{i+1})}\,Q_{i+1}(\mu)\,|B_{r_{i+2}}|\,, 
\end{eqnarray}
where we apply a modified version of Sobolev inequality and \eqref{young1}\,. Iterating \eqref{equ19} from $i=0$ to $i=h-1$, one finds that
\begin{equation}
\int_{B_{\rho}}Q(|\nabla u_\epsilon|)\leq\tilde{C}_h\int_{B_{\lambda^h\rho}}Q_h(|u_\epsilon|)+\bar{C}_h\int_{B_{\lambda^h\rho}}Q(|F_\epsilon|)+\bar{C}_h\,Q(\mu)\,|B_{\lambda^h\rho}|.
\end{equation}
By virtue of the strong convergence of $u_\epsilon$ to $u$ in $W^{1,P}_{\text{loc}}(\Omega)$, we take $\epsilon\rightarrow0$ and obtain
\begin{eqnarray*}
\int_{B_{\rho}}Q(|\nabla u|)\leq\tilde{C}_h\int_{B_{\lambda^h\rho}}Q(|u|)+\bar{C}_h\int_{B_{\lambda^h\rho}}Q(|F|)+\bar{C}_h\,Q(\mu)\,|B_{\lambda^h\rho}|.
\end{eqnarray*}
Thus we get
\begin{eqnarray*}
\fint_BQ(|\nabla u|)\leq C\left(1+\fint_{\lambda B}Q(|u|)+\fint_{\lambda B}Q(|F|)\right),
\end{eqnarray*}
which completes the proof of Proposition \ref{higher int}.
\end{proof}

\section{Proofs of Theorem \ref{theorem1.3} and Theorem \ref{MainTheorem}}
In this section we present the proofs of Theorem \ref{theorem1.3} and \ref{MainTheorem}, respectively. Inspired by \cite{2}, the following lemma will be helpful. 

\begin{lemma}
Let $A:\Omega\times\mathbb{R}^n\rightarrow\mathbb{R}^n$ be a Carath\'eodory vector field such that {\rm{(A1)-(A3)}} and \eqref{1.4} hold. Then $A$ is locally uniformly in VMO, that is,
\begin{equation}
\lim_{R\rightarrow0}\,\,\sup_{r(B)<R}\,\,\sup_{c(B)\in K}\fint_B V(x,B)\operatorname{d}\!x=0,
\end{equation}
where $V(x,B)$ is defined in \eqref{Vx-BR}, $K\subset\Omega$, $c(B)$ and $r(B)$ denote the center and the radius of the ball $B$, respectively. 
\end{lemma}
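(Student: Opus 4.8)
The plan is to reduce the VMO oscillation of $A$ directly to the modulus-of-continuity hypothesis \eqref{1.4} and then exploit the integrability $g\in L^{n/\alpha}(\Omega)$ together with the absolute continuity of the Lebesgue integral. First I would record a pointwise bound for $V(x,B)$. Fix a ball $B\subset\Omega$ with radius $r=r(B)$ and a point $x\in B$; averaging \eqref{1.4} in the $y$-variable over $B$ gives, for every $\xi\in\mathbb{R}^n$,
\begin{equation*}
|A(x,\xi)-A_B(\xi)|\le\left(\fint_B|x-y|^\alpha(g(x)+g(y))\operatorname{d}\!y\right)\frac{P\left[(\mu^2+|\xi|^2)^{1/2}\right]}{(\mu^2+|\xi|^2)^{1/2}}.
\end{equation*}
Dividing by the (strictly positive) $N$-function factor and taking the supremum over $\xi$ in the definition \eqref{Vx-BR}, and using $|x-y|\le 2r$ for $x,y\in B$, one obtains the clean estimate $V(x,B)\le(2r)^\alpha\bigl(g(x)+\fint_B g\bigr)$.

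Next I would average this inequality over $x\in B$, which yields $\fint_B V(x,B)\operatorname{d}\!x\le 2(2r)^\alpha\fint_B g$. To control $\fint_B g$ I would apply H\"older's inequality with exponents $\tfrac{n}{\alpha}$ and $\tfrac{n}{n-\alpha}$:
\begin{equation*}
\fint_B g\le|B|^{-1}\|g\|_{L^{n/\alpha}(B)}\,|B|^{1-\alpha/n}=|B|^{-\alpha/n}\|g\|_{L^{n/\alpha}(B)}=C_n\,r^{-\alpha}\|g\|_{L^{n/\alpha}(B)}.
\end{equation*}
Multiplying the two bounds, the factors $r^\alpha$ and $r^{-\alpha}$ cancel and one is left with the scale-invariant estimate $\fint_B V(x,B)\operatorname{d}\!x\le C\|g\|_{L^{n/\alpha}(B)}$, with $C=C(n,\alpha)$ independent of the ball.

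Finally, to pass to the limit $R\to0$, I would invoke the absolute continuity of the integral of $g^{n/\alpha}\in L^1(\Omega)$: given $\varepsilon>0$ there is $\delta>0$ such that $\int_E g^{n/\alpha}<\varepsilon$ whenever $|E|<\delta$. For $R$ small enough that $\omega_n R^n<\delta$ — and small enough that every ball with $c(B)\in K$ and $r(B)<R$ lies in $\Omega$, using $\operatorname{dist}(K,\partial\Omega)>0$ — every such $B$ satisfies $\|g\|_{L^{n/\alpha}(B)}^{n/\alpha}<\varepsilon$, hence $\sup_{r(B)<R}\sup_{c(B)\in K}\fint_B V(x,B)\operatorname{d}\!x\le C\varepsilon^{\alpha/n}$, and the right-hand side tends to $0$. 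The only genuinely delicate point is making the estimate uniform over all admissible balls simultaneously, and this is exactly what the single application of H\"older followed by absolute continuity delivers, since the resulting bound depends on $B$ only through $\|g\|_{L^{n/\alpha}(B)}$ and not on the position or the radius separately; everything else is routine.
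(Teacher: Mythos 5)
Your proof is correct and follows essentially the same route as the paper: both bound $V(x,B)$ via \eqref{1.4}, apply H\"older's inequality with exponents $\tfrac{n}{\alpha}$ and $\tfrac{n}{n-\alpha}$ so that the powers of the radius cancel and one is left with the scale-invariant bound $\fint_B V(x,B)\operatorname{d}\!x\le C(n,\alpha)\,\|g\|_{L^{n/\alpha}(B)}$, and then conclude by the absolute continuity of the integral of $g^{n/\alpha}$. The only cosmetic difference is that you estimate $|x-y|^\alpha\le(2r)^\alpha$ pointwise before applying H\"older, whereas the paper keeps the double integral and evaluates $\fint_B\fint_B|x-y|^{\frac{n\alpha}{n-\alpha}}$ directly; you also spell out the final limiting step, which the paper leaves implicit.
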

\begin{proof}
From the definition \eqref{Vx-BR}, one has
\begin{eqnarray*}
\fint_{B}V(x,B)\operatorname{d}\!x&=&\fint_{B}\sup_{\xi\in\mathbb{R}^{n}}\frac{|A(x,\xi)-A_{B}(\xi)|}{\frac{P\left[\left(\mu^2+|\xi|^2\right)^{\frac{1}{2}}\right]}{\left(\mu^2+|\xi|^2\right)^{\frac{1}{2}}}}\operatorname{d}\!x\\
&\leq&\fint_{B}\sup_{\xi\in\mathbb{R}^{n}}\fint_{B}\frac{|A(x,\xi)-A(y,\xi)|}{\frac{P\left[\left(\mu^2+|\xi|^2\right)^{\frac{1}{2}}\right]}{\left(\mu^2+|\xi|^2\right)^{\frac{1}{2}}}}\operatorname{d}\!y\operatorname{d}\!x\\
&\leq&\fint_{B}\sup_{\xi\in\mathbb{R}^{n}}\fint_{B}(g(x)+g(y))|x-y|^\alpha\operatorname{d}\!y\operatorname{d}\!x\\
&\leq&\left(\fint_{B}\fint_{B}(g(x)+g(y))^{\frac{n}{\alpha}}\operatorname{d}\!y\operatorname{d}\!x\right)^{\frac{\alpha}{n}}\left(\fint_{B}\fint_{B}|x-y|^{\frac{n\alpha}{n-\alpha}}\operatorname{d}\!y\operatorname{d}\!x\right)^{\frac{n-\alpha}{n}}\\
&\leq&\left(\frac{1}{|B|}\fint_{B}g^{\frac{n}{\alpha}}\right)^{\frac{\alpha}{n}}\cdot C(\alpha, n)\,|B|^{\frac{\alpha}{n}}=C(n,\alpha)\left(\int_{B}g^{\frac{n}{\alpha}}\right)^{\frac{\alpha}{n}},
\end{eqnarray*}
and we complete the proof.
\end{proof}
We are in a position to present the proof. 
\begin{proof}[Proof of Theorem \ref{theorem1.3}]
We divide the proof into four steps. \\
\textbf{Step 1.} Assume that $B_{3R}\subset\Omega$. We construct a test function $\varphi=\Delta_{-h}(\eta^2\Delta_{h}u)$ to \eqref{1.5}, where $\eta\in C^\infty_0(\Omega)$ is a cut-off function satisfying that 
\begin{equation*}
0\leq\eta(x)\leq1,\ \ \eta(x)\equiv 1\ \text{for}\ x\in B_{\frac{R}{2}},\ \ \eta(x)\equiv 0\ \text{for}\ x\in\Omega\,\backslash\,B_R,\ \text{and}\ |\nabla\eta|\leq\frac{C}{R}. 
\end{equation*}
It follows that 
\begin{eqnarray}\label{4.14}
I_1&=&\int_{B_{2R}}[A(x+h,\nabla u(x+h))-A(x+h,\nabla u)]\cdot\eta^2\,\Delta_{h} \nabla u\operatorname{d}\!x\nonumber\\
&=&\int_{B_{2R}}[A(x,\nabla u)-A(x+h,\nabla u)]\cdot\eta^2\,\Delta_{h} \nabla u\operatorname{d}\!x\nonumber\\
&&+\,\,\int_{B_{2R}}[A(x+h,\nabla u)-A(x+h,\nabla u(x+h))]\cdot2\eta\,\nabla\eta\,\Delta_{h}u\operatorname{d}\!x\nonumber\\
&&+\,\,\int_{B_{2R}}[A(x,\nabla u)-A(x+h,\nabla u)]\cdot2\eta\,\nabla\eta\,\Delta_{h}u\operatorname{d}\!x\nonumber\\[0.1cm]
&=&I_{2}+I_{3}+I_{4}.
\end{eqnarray}
\textbf{Step 2.} We estimate each $I_{i}$ ($1\leq i\leq 4$) appeared in \eqref{4.14}. Using (A1), $I_1$ is estimated as
\begin{equation}
\begin{split}
\begin{aligned}
I_{1}\geq\upsilon\int_{B_{2R}}\frac{P\left[\left(\mu^2+|\nabla u(x+h)|^{2}+|\nabla u|^2\right)^{\frac{1}{2}}\right]}{\mu^2+|\nabla u(x+h)|^{2}+|\nabla u|^{2}}\,|\Delta_{h}\nabla u|^{2}\,\eta^2\operatorname{d}\!x. \nonumber\\
\end{aligned}
\end{split}
\end{equation}
For $I_{2}$, by virtue of the hypothesis \eqref{1.4} and \eqref{young2}, we acquire that
\begin{eqnarray*}
I_{2}&\leq&\int_{B_{2R}}|h|^{\alpha}(g(x)+g(x+h))\frac{P\left[\left(\mu^2+|\nabla u|^2\right)^{\frac{1}{2}}\right]}{\left(\mu^2+|\nabla u|^2\right)^{\frac{1}{2}}}\,\eta^{2}\,|\Delta_{h} \nabla u|\operatorname{d}\!x\nonumber\\
&\leq&\epsilon\int_{B_{2R}}\frac{P\left[\left(\mu^2+|\nabla u|^2\right)^{\frac{1}{2}}\right]}{\mu^2+|\nabla u|^2}\,|\Delta_{h} \nabla u|^{2}\,\eta^{2}\operatorname{d}\!x\nonumber\\
&&+\,\,C\,|h|^{2\alpha}\int_{B_{2R}}\left(g(x)+g(x+h)\right)^{2}\,P\left[\left(\mu^2+|\nabla u|^2\right)^{\frac{1}{2}}\right]\operatorname{d}\!x.
\end{eqnarray*}
By the assumption (P1), we have
\begin{eqnarray*}
&&\epsilon\int_{B_{2R}}\frac{P\left[\left(\mu^2+|\nabla u|^2\right)^{\frac{1}{2}}\right]}{\mu^2+|\nabla u|^2}\,|\Delta_{h} \nabla u|^{2}\,\eta^{2}\operatorname{d}\!x\\
&\leq&\epsilon\int_{B_{2R}}\frac{P\left[\left(\mu^2+|\nabla u(x+h)|^{2}+|\nabla u|^2\right)^{\frac{1}{2}}\right]}{\mu^2+|\nabla u(x+h)|^{2}+|\nabla u|^{2}}\,|\Delta_{h} \nabla u|^{2}\,\eta^{2}\operatorname{d}\!x,
\end{eqnarray*}
where $\epsilon$ will be chosen later. According to (A2) and \eqref{young2}, we get an estimate for $I_{3}$ as 
\begin{eqnarray*}
I_3&\leq&C\int_{B_{2R}}\frac{P\left[\left(\mu^2+|\nabla u|^2+|\nabla u(x+h)|^2\right)^{\frac{1}{2}}\right]}{\mu^2+|\nabla u|^2+|\nabla u(x+h)|^2}\,|\Delta_{h}\nabla u|\,|\eta|\,|\nabla\eta|\,|\Delta_{h} u|\operatorname{d}\!x\\
&\leq&\epsilon\int_{B_{2R}}\frac{P\left[\left(\mu^2+|\nabla u|^2+|\nabla u(x+h)|^2\right)^{\frac{1}{2}}\right]}{\mu^2+|\nabla u|^2+|\nabla u(x+h)|^2}\,|\Delta_{h}\nabla u|^{2}\,\eta^2\operatorname{d}\!x\\
&&+\,\,C\int_{B_{2R}}\frac{P\left[\left(\mu^2+|\nabla u|^2+|\nabla u(x+h)|^2\right)^{\frac{1}{2}}\right]}{\mu^2+|\nabla u|^2+|\nabla u(x+h)|^2}\,|\nabla\eta|^2\,|\Delta_{h}u|^2\operatorname{d}\!x.
\end{eqnarray*}
 In view of the definition of $N$-function and Lagrange Mean Value Theorem, we obtain
\begin{eqnarray}\label{estimate-I3}
&&C\int_{B_{2R}}\frac{P\left[\left(\mu^2+|\nabla u|^2+|\nabla u(x+h)|^2\right)^{\frac{1}{2}}\right]}{\mu^2+|\nabla u|^2+|\nabla u(x+h)|^2}\,|\nabla\eta|^2\,|\Delta_{h} u|^2\operatorname{d}\!x\\
&\leq&C\,|h|^2\int_{B_{2R+|h|}}\frac{P\left[\left(\mu^2+2|\nabla u|^2\right)^{\frac{1}{2}}\right]}{\mu^2+2|\nabla u|^2}\,|\nabla u|^2\operatorname{d}\!x \nonumber \\
&\leq&C\,|h|^2\int_{B_{2R+|h|}}P(\mu+|\nabla u|)\operatorname{d}\!x. \nonumber 
\end{eqnarray}
To estimate $I_4$, the hypothesis \eqref{1.4} and \eqref{young2} give us that 
\begin{eqnarray*}
I_4&\leq&C\int_{B_{2R}}|h|^{\alpha}\left(g(x)+g(x+h)\right)\frac{P\left[\left(\mu^2+|\nabla u|^2\right)^{\frac{1}{2}}\right]}{(\mu^2+|\nabla u|^2)^{\frac{1}{2}}}\,|\eta|\,|\nabla\eta|\,|\Delta_{h} u|\operatorname{d}\!x\\
&\leq&\epsilon\int_{B_{2R}}\frac{P\left[\left(\mu^2+|\nabla u|^2\right)^{\frac{1}{2}}\right]}{\mu^2+|\nabla u|^2}\,\eta^2\,|\Delta_{h} u|^2\operatorname{d}\!x\\
&&+\,\,C\,|h|^{2\alpha}\int_{B_{2R}}(g(x)+g(x+h))^2\,P\left[\left(\mu^2+|\nabla u|^2\right)^{\frac{1}{2}}\right]\operatorname{d}\!x.
\end{eqnarray*}
As in the estimate of $I_3$, we get 
\begin{eqnarray*}
&&\epsilon\int_{B_{2R}}\frac{P\left[\left(\mu^2+|\nabla u|^2\right)^{\frac{1}{2}}\right]}{\mu^2+|\nabla u|^2}\,\eta^2\,|\Delta_{h} u|^2\operatorname{d}\!x\leq C\,|h|^2\int_{B_{2R+|h|}}P(\mu+|\nabla u|)\operatorname{d}\!x.
\end{eqnarray*}
Collecting the estimates of $I_i$, we evidently get if $0<\mu<1$, then 
\begin{eqnarray}
&&\left(\upsilon-2\epsilon\right)\int_{B_{2R}}\frac{P\left[\left(\mu^2+|\nabla u(x+h)|^{2}+|\nabla u|^2\right)^{\frac{1}{2}}\right]}{\mu^2+|\nabla u(x+h)|^{2}+|\nabla u|^{2}}|\Delta_{h} \nabla u|^{2}\,\eta^{2}\operatorname{d}\!x\nonumber\\
&\!\!\leq\!\!&C\,|h|^{2\alpha}\int_{B_{2R}}(g(x)+g(x+h))^2\,P\left[\left(\mu^2+|\nabla u|^2\right)^\frac{1}{2}\right]\operatorname{d}\!x\nonumber\\
&&+\,\,C\,|h|^2\int_{B_{2R+|h|}}P(\mu+|\nabla u|)\operatorname{d}\!x.
\end{eqnarray}
Choosing $\epsilon$ small enough, we obtain
\begin{eqnarray}
&&\int_{B_{2R}}\frac{P\left[\left(\mu^2+|\nabla u(x+h)|^{2}+|\nabla u|^2\right)^{\frac{1}{2}}\right]}{\mu^2+|\nabla u(x+h)|^{2}+|\nabla u|^{2}}|\Delta_{h} \nabla u|^{2}\,\eta^{2}\operatorname{d}\!x\nonumber\\
&\!\!\leq\!\!&C\,|h|^{2\alpha}\int_{B_{2R}}(g(x)+g(x+h))^2\,P\left[\left(\mu^2+|\nabla u|^2\right)^\frac{1}{2}\right]\operatorname{d}\!x\nonumber\\
&&+\,\,C\,|h|^2\int_{B_{2R+|h|}}P(\mu+|\nabla u|)\operatorname{d}\!x.
\end{eqnarray}
\textbf{Step 3.} 
By the assumption (P2), one gets 
\begin{equation}\label{equ22}
|\Delta_{h} V_P|^2\leq C\,\frac{P\left[\left(\mu^2+|\nabla u(x+h)|^2+|\nabla u|^2\right)^{\frac{1}{2}}\right]}{\mu^2+|\nabla u(x+h)|^2+|\nabla u|^2}\,|\Delta_{h}\nabla u|^2. 
\end{equation}
We integrate both sides of \eqref{equ22}, use the properties of $\eta$, and acquire
\begin{eqnarray}\label{4.7}
\int_{B_{\frac{R}{2}}}|\Delta_{h} V_P|^2\operatorname{d}\!x&\leq&C\int_{B_{\frac{R}{2}}}\frac{P\left[\left(\mu^2+|\nabla u(x+h)|^2+|\nabla u|^2\right)^{\frac{1}{2}}\right]}{\mu^2+|\nabla u(x+h)|^2+|\nabla u|^2}|\Delta_{h} \nabla u|^2\,\eta^2\operatorname{d}\!x\nonumber\\
&\leq&C\,|h|^{2\alpha}\int_{B_{2R}}(g(x)+g(x+h))^2\,P\left[\left(\mu^2+|\nabla u|^2\right)^\frac{1}{2}\right]\operatorname{d}\!x\nonumber\\
&&+\,\,C\,|h|^2\int_{B_{2R+|h|}}P(\mu+|\nabla u|)\operatorname{d}\!x.
\end{eqnarray}
Dividing both sides of \eqref{4.7} by $|h|^{2\alpha}$, it follows that 
\begin{eqnarray}
\int_{B_{\frac{R}{2}}}\left|\frac{\Delta_{h}V_P}{|h|^\alpha}\right|^2\operatorname{d}\!x&\leq&C\int_{B_{2R}}(g(x)+g(x+h))^2\,P\left[\left(\mu^2+|\nabla u|^2\right)^\frac{1}{2}\right]\operatorname{d}\!x\nonumber\\
&&+\,\,C\,|h|^{2-2\alpha}\int_{B_{2R+|h|}}P(\mu+|\nabla u|)\operatorname{d}\!x\nonumber\\
&=:&J_1+J_2.
\end{eqnarray}
\textbf{Step 4.} We shall prove that $J_i$ is bounded for each $i$. By Proposition \ref{higher int}, we get $P(|\nabla u|)\in L^{t}(\Omega)$ with $t>1$. In particular, $P(|\nabla u|)\in L^{\frac{n}{n-2\alpha}}(\Omega)$. By choosing $0<|h|<\delta<R$ and \eqref{1.4}, we obtain
\begin{eqnarray*}
J_1&\leq&C\left(\int_{B_{2R}}(g(x)+g(x+h))^{\frac{n}{\alpha}}\operatorname{d}\!x\right)^{\frac{2\alpha}{n}}\left(\int_{B_{2R}}P\left[\left(\mu^2+|\nabla u|^2\right)^{\frac{1}{2}}\right]^{\frac{n}{n-2\alpha}}\operatorname{d}\!x\right)^{\frac{n-2\alpha}{n}}\\
&\leq&C\left(\int_{B_{2R+|h|}}g(x)^{\frac{n}{\alpha}}\operatorname{d}\!x\right)^{\frac{2\alpha}{n}}\left(\int_{B_{2R}}P\left[\left(\mu^2+|\nabla u|^2\right)^{\frac{1}{2}}\right]^{\frac{n}{n-2\alpha}}\operatorname{d}\!x\right)^{\frac{n-2\alpha}{n}}<+\infty.
\end{eqnarray*}
Because $u\in W^{1,P}_{\text{loc}}(\Omega)$, we get $J_2<+\infty$. It follows that $\sup\limits_{|h|<\delta}\int_{B_{\frac{R}{2}}}\left|\frac{\Delta_hV_P}{|h|^\alpha}\right|^2\operatorname{d}\!x<+\infty$ with $\delta<R$, that is,  $V_P(\nabla u)\in B^\alpha_{2,\infty}(\Omega)$ locally.
\end{proof}
We also need the following lemma. 
\begin{lemma}
Let $A:\Omega\times\mathbb{R}^n\rightarrow\mathbb{R}^n$ be a Carath\'eodory vector field such that {\rm{(A1)-(A4)}} hold. Then $A$ is locally uniformly in VMO, that is,
\begin{equation}
\lim_{R\rightarrow0}\,\,\sup_{r(B)<R}\,\,\sup_{c(B)\in K}\fint_BV(x,B)\operatorname{d}\!x=0,
\end{equation}
where $V(x,B)$ is given in \eqref{Vx-BR}, $K\subset\Omega$, $c(B)$ and $r(B)$ denote the center and the radius of the ball $B$, respectively. 
\end{lemma}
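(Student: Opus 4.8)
The plan is to recycle the averaging argument of the single--function VMO lemma above, but to organise it dyadically so that hypothesis (A4) can be applied scale by scale. Fix $K\subset\Omega$, a point $x_0\in K$, and a ball $B=B_r(x_0)$ with $r$ small enough that $B\subset\Omega$. Starting from \eqref{Vx-BR} and using $|A(x,\xi)-A_B(\xi)|\le\fint_B|A(x,\xi)-A(y,\xi)|\,dy$, then dividing by $P[(\mu^2+|\xi|^2)^{1/2}]/(\mu^2+|\xi|^2)^{1/2}$ and invoking the second line of (A4) with the index $k=k(x,y)$ determined by $2^{-k}\le|x-y|<2^{-k+1}$, the $\xi$--dependence drops out and one obtains
\begin{equation*}
\fint_B V(x,B)\,dx\ \le\ \fint_B\fint_B|x-y|^{\alpha}\bigl(g_{k(x,y)}(x)+g_{k(x,y)}(y)\bigr)\,dy\,dx .
\end{equation*}

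Next I would decompose $B\times B$ into the dyadic shells $S_k=\{(x,y):2^{-k}\le|x-y|<2^{-k+1}\}$; since $\operatorname{diam}B=2r$, only indices $k\ge k_0(r)$ occur, where $k_0(r)$ is fixed by $r<2^{-k_0(r)}\le2r$, so that $k_0(r)\to\infty$ as $r\to0$, uniformly in $x_0$. On $S_k$ one has $|x-y|^{\alpha}\le 2^{\alpha}2^{-k\alpha}$; using symmetry in $x,y$ together with $\bigl|\{y\in B:(x,y)\in S_k\}\bigr|\le\bigl|B_{2^{-k+1}}(x)\bigr|\le C_n 2^{-kn}$, and then H\"older's inequality on $B$ with exponents $\tfrac{n}{\alpha}$ and $\tfrac{n}{n-\alpha}$, which gives $\int_B g_k\le C_n\|g_k\|_{L^{n/\alpha}(\Omega)}\,r^{n-\alpha}$, the $k$--th shell contributes at most $C\,2^{-k(n+\alpha)}\,r^{-(n+\alpha)}\,\|g_k\|_{L^{n/\alpha}(\Omega)}$.

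Summing over $k\ge k_0(r)$ and using $r^{-(n+\alpha)}\le 2^{(k_0(r)+1)(n+\alpha)}$, the negative powers of $2^{-k_0(r)}$ cancel the divergent factor $r^{-(n+\alpha)}$, leaving a convergent geometric sum:
\begin{equation*}
\fint_B V(x,B)\,dx\ \le\ C\sum_{j\ge0}2^{-j(n+\alpha)}\,\|g_{k_0(r)+j}\|_{L^{n/\alpha}(\Omega)}\ \le\ \frac{C}{1-2^{-(n+\alpha)}}\,\sup_{k\ge k_0(r)}\|g_k\|_{L^{n/\alpha}(\Omega)} .
\end{equation*}
Since $(g_k)_k\in\ell^q(L^{n/\alpha}(\Omega))$ with $q<\infty$, convergence of $\sum_k\|g_k\|_{L^{n/\alpha}(\Omega)}^q$ forces $\|g_k\|_{L^{n/\alpha}(\Omega)}\to0$, hence $\sup_{k\ge k_0(r)}\|g_k\|_{L^{n/\alpha}(\Omega)}\to0$ as $r\to0$. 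As the bound is independent of $x_0\in K$, taking $\sup_{r(B)<R}\sup_{c(B)\in K}$ and letting $R\to0$ proves the claim.

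The step I expect to require the most care is the matching of the dyadic cutoff $k_0(r)$ with the radius $r$: the factor $r^{-(n+\alpha)}$ produced by the double average $\fint_B\fint_B$ must be absorbed exactly by the smallest admissible scale $2^{-k_0(r)}\asymp r$, and it is at precisely this point that one needs genuine $\ell^q$--summability of $\bigl(\|g_k\|_{L^{n/\alpha}(\Omega)}\bigr)_k$, rather than mere boundedness, in order to upgrade the geometric sum from being bounded to tending to $0$.
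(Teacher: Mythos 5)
Your proof is correct, and while it opens the same way as the paper's, the concluding mechanism is genuinely different. Both arguments start by bounding $V(x,B)$ through the double average of $|A(x,\xi)-A(y,\xi)|$ over $B$, dividing out the weight, and splitting $B\times B$ into the dyadic shells $2^{-k}\le|x-y|<2^{-k+1}$ so that (A4) applies with the matching $g_k$. From there the paper keeps all scales and applies H\"older twice, first in $(x,y)$ with exponents $\tfrac{n}{\alpha}$ and $\tfrac{n}{n-\alpha}$ and then in the dyadic index $k$ with exponents $q$ and $q'$ against the shell measures, ending with a bound of the form $C(n,\alpha,q)\bigl(\sum_k\|g_k\|_{L^{n/\alpha}(B)}^q\bigr)^{1/q}$, i.e.\ the $\ell^q(L^{n/\alpha})$ quantity localized to the small ball; the limit as $r\to0$ is then extracted from absolute continuity of the integrals $\int_{B_r}g_k^{n/\alpha}$, made uniform on the compact set $K$ by a continuity argument in the center. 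You instead observe that only scales $k\ge k_0(r)\sim\log_2(1/r)$ can occur, estimate each shell by its measure $\lesssim 2^{-kn}$ together with a single H\"older inequality in $x$, and sum the geometric series, obtaining $\fint_B V(x,B)\,\operatorname{d}\!x\lesssim\sup_{k\ge k_0(r)}\|g_k\|_{L^{\frac{n}{\alpha}}(\Omega)}$; the smallness then comes from decay in the dyadic index, since $\sum_k\|g_k\|^q_{L^{n/\alpha}(\Omega)}<\infty$ with $q<\infty$ forces $\|g_k\|_{L^{n/\alpha}(\Omega)}\to0$. Your bookkeeping at the delicate point is right: $r^{-(n+\alpha)}\le 2^{(k_0(r)+1)(n+\alpha)}$ is exactly what absorbs the normalization $|B|^{-2}$. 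What your route buys is a bound uniform over all admissible centers in $\Omega$ (compactness of $K$ is never used) and the weaker hypothesis $\|g_k\|_{L^{n/\alpha}(\Omega)}\to0$ rather than full $\ell^q$-summability; what the paper's route buys is a bound expressed through the localized quantity $\|(g_k)_k\|_{\ell^q(L^{n/\alpha}(B))}$, mirroring the structure of (A4), at the cost of a more delicate balancing of powers of $|B|$ and the continuity argument over $K$.
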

\begin{proof}
Given a point $x\in\Omega$, let us write $A_k(x)=\{y\in\Omega:2^{-k}\leq|x-y|<2^{-k+1}\}$. We have
\begin{eqnarray*}
\fint_{B}V(x,B)\operatorname{d}\!x&\leq&\fint_{B}\sup_{\xi\in\mathbb{R}^{n}}\fint_{B}\frac{|A(x,\xi)-A(y,\xi)|}{\frac{P\left[\left(\mu^2+|\xi|^2\right)^{\frac{1}{2}}\right]}{\left(\mu^2+|\xi|^2\right)^{\frac{1}{2}}}}\operatorname{d}\!y\operatorname{d}\!x\\
&=&\fint_{B}\sup_{\xi\in\mathbb{R}^{n}}\frac{1}{|B|}\sum_{k}\int_{B\,\cap\,A_k(x)}\frac{|A(x,\xi)-A(y,\xi)|}{\frac{P\left[\left(\mu^2+|\xi|^2\right)^{\frac{1}{2}}\right]}{\left(\mu^2+|\xi|^2\right)^{\frac{1}{2}}}}\operatorname{d}\!y\operatorname{d}\!x\\
&\leq&\frac{1}{|B|^2}\sum_{k}\int_B\int_{B\,\cap\,A_k(x)}|x-y|^\alpha(g_k(x)+g_k(y))\operatorname{d}\!y\operatorname{d}\!x\\
&\leq&\left(\frac{1}{|B|^2}\sum_k\int_B\int_{B\,\cap\,A_k(x)}|x-y|^{\frac{n\alpha}{n-\alpha}}\operatorname{d}\!y\operatorname{d}\!x\right)^{\frac{n-\alpha}{n}}\\
&&\ \ \cdot\left(\frac{1}{|B|^2}\sum_k\int_{B}\int_{B\,\cap\,A_k(x)}(g_k(x)+g_k(y))^{\frac{n}{\alpha}}\operatorname{d}\!y\operatorname{d}\!x\right)^{\frac{\alpha}{n}}\\
&\leq&C(n,\alpha)|B|^{\frac{\alpha}{n}}\left(\frac{1}{|B|^2}\sum_k\int_{B}\int_{B\,\cap\,A_k(x)}(g_k(x)+g_k(y))^{\frac{n}{\alpha}}\operatorname{d}\!y\operatorname{d}\!x\right)^{\frac{\alpha}{n}}.
\end{eqnarray*}
By H\"older inequality, one obtains that 
\begin{eqnarray*}
&&\left(\frac{1}{|B|^2}\sum_k\int_{B}\int_{B\,\cap\,A_k(x)}(g_k(x)+g_k(y))^{\frac{n}{\alpha}}\operatorname{d}\!y\operatorname{d}\!x\right)^{\frac{\alpha}{n}}\\
&\leq&C\left(\frac{1}{|B|^2}\sum_k|B\,\cap\,A_k(x)|\int_Bg_k(x)^{\frac{n}{\alpha}}\operatorname{d}\!x\right)^{\frac{\alpha}{n}}\\
&\leq&\frac{C}{|B|^{\frac{2}{q}}}\left(\sum_k\|g_k\|^q_{L^{\frac{n}{\alpha}}(B)}\right)^{\frac{1}{q}}\frac{1}{|B|^{2\left(\frac{\alpha}{n}-\frac{1}{q}\right)}}\left(\sum_k|B\,\cap\,A_k(x)|^{\frac{\alpha q}{\alpha q-n}}\right)^{\frac{\alpha}{n}\frac{\alpha q-n}{\alpha q}}\\
&\leq&C(n,\alpha, q)\,|B|^{-\frac{\alpha}{n}}\left(\sum_k\|g_k\|^q_{L^{\frac{n}{\alpha}}(B)}\right)^{\frac{1}{q}}.
\end{eqnarray*}
We fix $r>0$ small enough and observe that $x\rightarrow\|g_k\|_{l^q(L^{\frac{n}{\alpha}}(B_{r}(x)))}$ is continuous on the set $\{x\in\Omega:{\operatorname{dist}}(x,\partial\Omega)>r\}$. As a consequence, for $r>0$ small enough, there is a point $x_r\in K$ such that
\begin{eqnarray*}
&&\sup_{x\in K}\|g_k\|_{l^q({L^{\frac{n}{\alpha}}(B_{r}(x))})}=\|g_k\|_{l^q({L^{\frac{n}{\alpha}}(B_{r}(x))})}.
\end{eqnarray*}
We obtain that
\begin{eqnarray*}
&&\lim_{r\rightarrow0}\|g_k\|_{l^q({L^{\frac{n}{\alpha}}(B_{r}(x))})}=\left(\sum_k\lim_{r\rightarrow0}\left(\int_{B_{r}(x_r)}g_k^{\frac{n}{\alpha}}\right)^{\frac{q\alpha}{n}}\right)^{\frac{1}{q}}.
\end{eqnarray*}
Each of the limits on the right hand side equals to $0$, and so it completes the proof.
\end{proof}
With the help of preceding lemma, we have the following. 
\begin{proof}[Proof of Theorem \ref{MainTheorem}]
We divide the proof in four steps. \\
\textbf{Step 1.} We assume that $B_{3R+1}\subset\Omega$, and choose a test function $\varphi=\Delta_{-h}(\eta^2\Delta_{h}u)$ to \eqref{equ}, where $\eta\in C^\infty_0(\Omega)$ is a cut-off function satisfying that 
\begin{equation*}
0\leq\eta(x)\leq1,\ \ \eta(x)\equiv 1\ \text{for}\ x\in B_{\frac{R}{2}},\ \ \eta(x)\equiv 0\ \text{for}\ x\in\Omega\,\backslash\,B_R,\ \text{and}\ |\nabla\eta|\leq\frac{C}{R}\,. 
\end{equation*}
According to the definition of weak solution and choice of test function, we have 
\begin{eqnarray}\label{equ30}
I_1&=&\int_{B_{2R}}[A(x+h,\nabla u(x+h))-A(x+h,\nabla u)]\cdot\eta^2\,\Delta_{h} \nabla u\operatorname{d}\!x\nonumber\\
&=&\int_{B_{2R}}[A(x,\nabla u)-A(x+h,\nabla u)]\cdot\eta^2\,\Delta_{h} \nabla u\operatorname{d}\!x\nonumber\\
&&+\,\,\int_{B_{2R}}[A(x+h,\nabla u)-A(x+h,\nabla u(x+h))]\cdot2\eta\,\nabla\eta\,\Delta_{h}u\operatorname{d}\!x\nonumber\\
&&+\,\,\int_{B_{2R}}[A(x,\nabla u)-A(x+h,\nabla u)]\cdot2\eta\,\nabla\eta\,\Delta_{h}u\operatorname{d}\!x\nonumber\\
&&+\,\,\int_{B_{2R}}\Delta_{h}\left[\frac{P(|F|)}{|F|^2}F\right]\cdot2\eta\,\nabla\eta\,\Delta_{h}u\operatorname{d}\!x+\int_{B_{2R}}\Delta_{h}\left[\frac{P(|F|)}{|F|^2}F\right]\cdot\eta^2\,\Delta_{h} \nabla u\operatorname{d}\!x\nonumber\\
&=&I_{2}+I_{3}+I_{4}+I_{5}+I_{6}.
\end{eqnarray}
\textbf{Step 2.} One can estimate the terms $I_1$ to $I_4$ as in the proof of Theorem \ref{theorem1.3}. We should mention that, in order to  evaluate $I_{2}$ to $I_4$, we apply the assumption (A4) instead of \eqref{1.4}, and substitute $g$ by $g_{k}$. Thus it remains to estimate $I_5$ and $I_6$. For $I_5$, we apply \eqref{young2} to get
\begin{eqnarray*}
I_5&\leq&C\int_{B_{2R}}\left|\Delta_{h}\left[\frac{P(|F|)}{|F|^2}F\right]\right|\,|\Delta_{h}u|\,|\eta|\operatorname{d}\!x\\
&\leq&C\int_{B_{2R}}\left|\Delta_{h}\left[\frac{P(|F|)}{|F|^2}F\right]\right|^2\operatorname{d}\!x+C\int_{B_{2R}}|\Delta_{h}u|^2\,\eta^2\operatorname{d}\!x\\
&\leq&C\,|h|^{2\alpha}\int_{B_{2R}}\left|\frac{\Delta_{h}\left[\frac{P(|F|)}{|F|^2}F\right]}{|h|^{\alpha}}\right|^2\operatorname{d}\!x+C\int_{B_{2R}}|\Delta_{h}u|^2\,\eta^2\operatorname{d}\!x.
\end{eqnarray*}
By the assumption (P1) and the Lagrange mean value theorem, we acquire
\begin{eqnarray*}
C\int_{B_{2R}}|\Delta_{h}u|^2\,\eta^2\operatorname{d}\!x&\leq&C\int_{B_{2R}}\frac{P(\mu)}{\mu^2}\,|\Delta_{h}u|^2\,\eta^2\operatorname{d}\!x\\
&\leq&C\,|h|^2\int_{B_{2R+|h|}}\frac{P\left[\left(\mu^2+|\nabla u|^2\right)^{\frac{1}{2}}\right]}{\mu^2+|\nabla u|^2}\,|\nabla u|^2\operatorname{d}\!x\\
&\leq&C\,|h|^2\int_{B_{2R+|h|}}P(\mu+|\nabla u|)\operatorname{d}\!x.
\end{eqnarray*}
As in the case of estimating $I_5$, it is apparent that
\begin{eqnarray*}
I_6&\leq&\int_{B_{2R}}\left|\Delta_{h}\left[\frac{P(|F|)}{|F|^2}F\right]\right|\,|\Delta_{h}\nabla u|\,\eta^2\operatorname{d}\!x\\
&\leq&C\int_{B_{2R}}\left|\Delta_{h}\left[\frac{P(|F|)}{|F|^2}F\right]\right|^2\operatorname{d}\!x+\epsilon\int_{B_{2R}}|\Delta_{h}\nabla u|^2\,\eta^2\operatorname{d}\!x\\
&=&C\,|h|^{2\alpha}\int_{B_{2R}}\left|\frac{\Delta_{h}\left[\frac{P(|F|)}{|F|^2}F\right]}{|h|^{\alpha}}\right|^2\operatorname{d}\!x+\epsilon\int_{B_{2R}}|\Delta_{h}\nabla u|^2\,\eta^2\operatorname{d}\!x.
\end{eqnarray*}
Using the fact of $0<\mu<1$ and the assumption (P1), we acquire
\begin{eqnarray*}
&&\epsilon\int_{B_{2R}}|\Delta_{h}\nabla u|^2\,\eta^2\operatorname{d}\!x\\
&\leq&\frac{\epsilon\mu^2}{P(\mu)}\int_{B_{2R}}\frac{P(\mu)}{\mu^2}|\Delta_{h}\nabla u|^2\,\eta^2\operatorname{d}\!x\\
&\leq&\frac{\epsilon\mu^2}{P(\mu)}\int_{B_{2R}}\frac{P\left[\left(\mu^2+|\nabla u(x+h)|^{2}+|\nabla u|^2\right)^{\frac{1}{2}}\right]}{\mu^2+|\nabla u(x+h)|^{2}+|\nabla u|^{2}}|\Delta_{h} \nabla u|^{2}\,\eta^{2}\operatorname{d}\!x.
\end{eqnarray*}
Collecting the estimates of $I_i$, we evidently get if $0<\mu<1$, then there holds 
\begin{eqnarray}\label{4.4}
&&\left(\upsilon-2\epsilon-\frac{\epsilon\mu^2}{P(\mu)}\right)\int_{B_{2R}}\frac{P\left[\left(\mu^2+|\nabla u(x+h)|^{2}+|\nabla u|^2\right)^{\frac{1}{2}}\right]}{\mu^2+|\nabla u(x+h)|^{2}+|\nabla u|^{2}}|\Delta_{h} \nabla u|^{2}\,\eta^{2}\operatorname{d}\!x\nonumber\\
&\!\!\leq\!\!&C\,|h|^{2\alpha}\int_{B_{2R}}(g_k(x)+g_k(x+h))^2\,P\left[\left(\mu^2+|\nabla u|^2\right)^\frac{1}{2}\right]\operatorname{d}\!x\nonumber\\
&&+\,\,C\,|h|^2\int_{B_{2R+|h|}}P(\mu+|\nabla u|)\operatorname{d}\!x+C\,|h|^{2\alpha}\int_{B_{2R}}\left|\frac{\Delta_{h}\left[\frac{P(|F|)}{|F|^2}F\right]}{|h|^{\alpha}}\right|^2\operatorname{d}\!x.
\end{eqnarray}
By choosing $\epsilon$ small enough, we obtain that
\begin{eqnarray}
&&\int_{B_{2R}}\frac{P\left[\left(\mu^2+|\nabla u(x+h)|^{2}+|\nabla u|^2\right)^{\frac{1}{2}}\right]}{\mu^2+|\nabla u(x+h)|^{2}+|\nabla u|^{2}}|\Delta_{h} \nabla u|^{2}\,\eta^{2}\operatorname{d}\!x\nonumber\\
&\leq&C\,|h|^{2\alpha}\int_{B_{2R}}(g_k(x)+g_k(x+h))^2\,P\left[\left(\mu^2+|\nabla u|^2\right)^\frac{1}{2}\right]\operatorname{d}\!x\nonumber\\
&&+\,\,C\,|h|^2\int_{B_{2R+|h|}}P(\mu+|\nabla u|)\operatorname{d}\!x+C\,|h|^{2\alpha}\int_{B_{2R}}\left|\frac{\Delta_{h}\left[\frac{P(|F|)}{|F|^2}F\right]}{|h|^{\alpha}}\right|^2\operatorname{d}\!x.
\end{eqnarray}
\textbf{Step 3.} 
By the auxiliary function \eqref{Vp-def} and the assumption (P2), we obtain that
\begin{equation}\label{4.12}
|\Delta_{h} V_P|^2\leq C\frac{P\left[\left(\mu^2+|\nabla u(x+h)|^2+|\nabla u|^2\right)^{\frac{1}{2}}\right]}{\mu^2+|\nabla u(x+h)|^2+|\nabla u|^2}\,|\Delta_{h} \nabla u|^2. \\
\end{equation}
Integrating both sides of \eqref{4.12}, we acquire
\begin{eqnarray}\label{equ12}
\int_{B_{\frac{R}{2}}}|\Delta_{h} V_P|^2\operatorname{d}\!x&\leq&C\int_{B_{\frac{R}{2}}}\frac{P\left[\left(\mu^2+|\nabla u(x+h)|^2+|\nabla u|^2\right)^{\frac{1}{2}}\right]}{\mu^2+|\nabla u(x+h)|^2+|\nabla u|^2}|\Delta_{h} \nabla u|^2\,\eta^2\operatorname{d}\!x\nonumber\\
&\leq&C\,|h|^2\int_{B_{2R+|h|}}P(\mu+|\nabla u|)\operatorname{d}\!x+C\,|h|^{2\alpha}\int_{B_{2R}}\left|\frac{\Delta_{h}\left[\frac{P(|F|)}{|F|^2}F\right]}{|h|^{\alpha}}\right|^2\operatorname{d}\!x\nonumber\\
&&+\,\,C\,|h|^{2\alpha}\int_{B_{2R}}(g_k(x)+g_k(x+h))^2\,P\left[\left(\mu^2+|\nabla u|^2\right)^\frac{1}{2}\right]\operatorname{d}\!x.
\end{eqnarray}
Dividing both sides of \eqref{equ12} by $|h|^{2\alpha}$ and applying the properties of $\eta$, one derives that 
\begin{eqnarray}\label{equ13}
\int_{B_{\frac{R}{2}}}\left|\frac{\Delta_{h}V_P}{|h|^{\alpha}}\right|^2\operatorname{d}\!x&\leq&C\,|h|^{2-2\alpha}\int_{B_{2R+|h|}}P(\mu+|\nabla u|)\operatorname{d}\!x+C\int_{B_{2R}}\left|\frac{\Delta_{h}\left[\frac{P(|F|)}{|F|^2}F\right]}{|h|^{\alpha}}\right|^2\operatorname{d}\!x\nonumber\\ 
&&+\,\,C\int_{B_{2R}}(g_k(x)+g_k(x+h))^2\,P\left[\left(\mu^2+|\nabla u|^2\right)^\frac{1}{2}\right]\operatorname{d}\!x.
\end{eqnarray}
We raise both sides of \eqref{equ13} to the power of $\frac{1}{2}$ and get
\begin{eqnarray}\label{equ23}
\left(\int_{B_{\frac{R}{2}}}\left|\frac{\Delta_{h}V_P}{|h|^{\alpha}}\right|^2\operatorname{d}\!x\right)^{\frac{1}{2}}&\!\!\leq\!\!&C\left(\int_{B_{2R}}(g_k(x)+g_k(x+h))^2\,P\left[\left(\mu^2+|\nabla u|^2\right)^\frac{1}{2}\right]\operatorname{d}\!x\right)^{\frac{1}{2}}\nonumber\\
&&+\,\,C\,|h|^{1-\alpha}\left(\int_{B_{2R+|h|}}P(\mu+|\nabla u|)\operatorname{d}\!x\right)^{\frac{1}{2}}\nonumber\\
&&+\,\,C\left(\int_{B_{2R}}\left|\frac{\Delta_{h}\left[\frac{P(|F|)}{|F|^2}F\right]}{|h|^{\alpha}}\right|^2\operatorname{d}\!x\right)^{\frac{1}{2}}.
\end{eqnarray}
Restricting to $B_\delta$ with $0<|h|<\delta$ and taking the $L^q$ norm with respect to the measure $\frac{\operatorname{d}\!h}{|h|^n}$, it follows that 
\begin{eqnarray}
&&\left(\int_{B_{\delta}}\left(\int_{B_{\frac{R}{2}}}\left|\frac{\Delta_{h}V_P}{|h|^{\alpha}}\right|^2\operatorname{d}\!x\right)^{\frac{q}{2}}\frac{\operatorname{d}\!h}{|h|^n}\right)^{\frac{1}{q}} \nonumber \\
&\leq& C\left(\int_{B_{\delta}}\left(\int_{B_{2R}}(g_k(x)+g_k(x+h))^2\,P\left[\left(\mu^2+|\nabla u|^2\right)^{\frac{1}{2}}\right]\operatorname{d}\!x\right)^{\frac{q}{2}}\frac{\operatorname{d}\!h}{|h|^n}\right)^{\frac{1}{q}} \nonumber\\
&&+\,\,C\left(\int_{B_{\delta}}|h|^{(1-\alpha)q}\left(\int_{B_{2R+|h|}}P(\mu+|\nabla u|)\operatorname{d}\!x\right)^{\frac{q}{2}}\frac{\operatorname{d}\!h}{|h|^n}\right)^{\frac{1}{q}} \nonumber\\
&&+\,\,C\left(\int_{B_{\delta}}\left(\int_{B_{2R}}\left|\frac{\Delta_{h}\left[\frac{P(|F|)}{|F|^2}F\right]}{|h|^{\alpha}}\right|^2\operatorname{d}\!x\right)^{\frac{q}{2}}\frac{\operatorname{d}\!h}{|h|^n}\right)^{\frac{1}{q}}\nonumber\\
&=:&J_1+J_2+J_3. 
\end{eqnarray}
\textbf{Step 4.} We shall show that each $J_i$ is bounded. Since $B^\alpha_{2,q}(\Omega)\subset L^{\frac{2n}{n-2\alpha}}(\Omega)$ with $1\leq q<\frac{2n}{n-2\alpha}$, one has $\frac{P(|F|)}{|F|^2}F\in L^{\frac{2n}{n-2\alpha}}(\Omega)$. By Proposition \ref{higher int}, we get $\frac{P(\mu+|\nabla u|)}{(\mu+|\nabla u|)^2}\nabla u\in L^{\frac{2n}{n-2\alpha}}(\Omega)$. This implies that $\nabla u\in L^{\frac{2n}{n-2\alpha}}(\Omega)$ and so $P(|\nabla u|)\in L^{\frac{n}{n-2\alpha}}(\Omega)$. \\
\indent To estimate $J_{1}$, we describe the $L^q$ norm in polar coordinates. Without loss of generality, we assume that $\delta=1$, hence $h\in B_{1}$ is equivalent to $h=r\xi$ for $0\leq r<1$ and $\xi$ in the unit sphere $\mathbb{S}^{n-1}$. Let $\operatorname{d}\!\sigma(\xi)$ be the surface measure on $\mathbb{S}^{n-1}$. By setting $r_k=\frac{1}{2^k}$, we bound $J_1$ by 
\begin{eqnarray}
J_1&=& \int^1_0\int_{\mathbb{S}^{n-1}}\left(\int_{B_{2R}}(g_k(x+r\xi)+g_k(x))^2\,P\left[\left(\mu^2+|\nabla u|^2\right)^{\frac{1}{2}}\right]\operatorname{d}\!x\right)^{\frac{q}{2}}\operatorname{d}\!\sigma(\xi)\frac{\operatorname{d}\!r}{r}\nonumber\\
&=& \sum^\infty_{k=0}\int^{r_k}_{r_{k+1}}\int_{\mathbb{S}^{n-1}}\left(\int_{B_{2R}}(g_k(x+r\xi)+g_k(x))^2\, P\left[\left(\mu^2+|\nabla u|^2\right)^{\frac{1}{2}}\right]\operatorname{d}\!x\right)^{\frac{q}{2}}\operatorname{d}\!\sigma(\xi)\frac{\operatorname{d}\!r}{r}\nonumber\\
&\leq& \sum^\infty_{k=0}\int^{r_k}_{r_{k+1}}\int_{\mathbb{S}^{n-1}}\left\|(\tau_{r\xi}\,g_k+g_k)\,\left(P\left[\left(\mu^2+|\nabla u|^2\right)^{\frac{1}{2}}\right]\right)^{\frac{1}{2}}\right\|^q_{L^2(B_{2R})}\operatorname{d}\!\sigma(\xi)\frac{\operatorname{d}\!r}{r}. \nonumber
\end{eqnarray}
Here $\tau_{r\xi}\,g_{k}(x)=g_{k}(x+r\xi)$. Since $P(|\nabla u|)\in L^{\frac{n}{n-2\alpha}}(\Omega)$ and $g_k\in L^{\frac{n}{\alpha}}(\Omega)$, we obtain that 
\begin{eqnarray*}
&&\left\|(\tau_{r\xi}\,g_k+g_k)\,\left(P\left[\left(\mu^2+|\nabla u|^2\right)^{\frac{1}{2}}\right]\right)^{\frac{1}{2}}\right\|_{L^2(B_{2R})}\nonumber\\
&=&\left(\int_{B_{2R}}(\tau_{r\xi}\,g_k+g_k)^2\,P\left[\left(\mu^2+|\nabla u|^2\right)^{\frac{1}{2}}\right]\operatorname{d}\!x\right)^{\frac{1}{2}}\nonumber\\
&\leq&\left(\left[\int_{B_{2R}}(\tau_{r\xi}\,g_k+g_k)^{2\,\frac{n}{2\alpha}}\operatorname{d}\!x\right]^{\frac{2\alpha}{n}}\left[\int_{B_{2R}}P\left[\left(\mu^2+|\nabla u|^2\right)^{\frac{1}{2}}\right]^{\frac{n}{n-2\alpha}}\operatorname{d}\!x\right]^{\frac{n-2\alpha}{n}}\right)^{\frac{1}{2}}\nonumber\\
&=&\|(\tau_{r\xi}\,g_k+g_k)\|_{L^{\frac{n}{\alpha}}(B_{2R})}\,\left\|P\left[\left(\mu^2+|\nabla u|^2\right)^{\frac{1}{2}}\right]\right\|^{\frac{1}{2}}_{L^{\frac{n}{n-2\alpha}}(B_{2R})}.
\end{eqnarray*}
On the other hand, noting that for each $\xi\in \mathbb{S}^{n-1}$ and $r_{k+1}\leq r\leq r_k$, there holds 
\begin{equation}
\|(\tau_{r\xi}\,g_k+g_k)\|_{L^{\frac{n}{\alpha}}(B_{2R})}\leq\|g_k\|_{L^{\frac{n}{\alpha}}((B_{2R})-r_{k}\xi)}+\|g_k\|_{L^{\frac{n}{\alpha}}(B_{2R})}\leq2\,\|g_k\|_{L^{\frac{n}{\alpha}}(\varrho B_R)},\nonumber
\end{equation}
where $\varrho=3+\frac{1}{R}$. Hence one gets 
\begin{equation}
J_1\,\leq\,C\,\left\|P\left[\left(\mu^2+|\nabla u|^2\right)^{\frac{1}{2}}\right]\right\|^{\frac{1}{2}}_{L^{\frac{n}{n-2\alpha}}(B_{2R})}\,\|\{g_k\}_k\|_{l^q\left(L^{\frac{n}{\alpha}}(\varrho B_R)\right)}<\infty. \nonumber
\end{equation}
Using the fact that $u\in W^{1,P}(\Omega)$, we deduce that 
\begin{eqnarray*}
J_2&\leq&C\left(\int_{B_\delta}|h|^{(1-\alpha)q-n}\operatorname{d}\!x\right)^{\frac{1}{q}}\left(\int_{B_{2R+|h|}}P(\mu+|\nabla u|)\operatorname{d}\!x\right)^{\frac{1}{2}}\\
&\leq&C\left(\int^\delta_0\rho^{(1-\alpha)q-1}\operatorname{d}\!\rho\right)^{\frac{1}{q}}\left(\int_{B_{2R+|h|}}P(\mu+|\nabla u|)\operatorname{d}\!x\right)^{\frac{1}{2}}<\infty.
\end{eqnarray*}
As $\frac{P(|F|)}{|F|^2}F\in B^\alpha_{2,q}(\Omega)$, it follows that 
\begin{equation}
J_3=C\,\left\|\frac{\Delta_{h}\left[\frac{P(|F|)}{|F|^2}F\right]}{|h|^\alpha}\right\|_{L^q\left(\frac{\operatorname{d}\!h}{|h|^n};L^2(B_{2R})\right)}<\infty. \nonumber\\
\end{equation}
Therefore, we complete the proof of Theorem \ref{MainTheorem}. 
\end{proof}

\section*{Acknowledgments}
The authors are supported by the National Natural Science Foundation of China (NNSF Grant No.~12001333) and Shandong Provincial Natural Science Foundation (Grant No.~ZR2020QA005).


\begin{thebibliography}{99}
\bibitem{1} 
     \newblock Bais\'on A.~L., Clop A., Giova R., Orobitg J., Passarelli di Napoli A., 
     \newblock Fractional Differentiability for Solutions of Nonlinear Elliptic Equations, 
     \newblock Potential Analysis, 2017, 46: 403-430. 
\bibitem{18-1} 
     \newblock Balci A.~K., Diening L., Weimar M., 
     \newblock Higher Order Calder\'on-Zygmund Estimates for the $p$-Laplace Equation, 
     \newblock Journal of Differential Equations, 2020, 268 (2): 590-635. 
\bibitem{12-6} 
     \newblock Byun S.~S., Wang L., 
     \newblock $L^{p}$-estimates for General Nonlinear Elliptic Equations, 
     \newblock Indiana University Mathematics Journal, 2007, 56 (6), 3193-3222. 
\bibitem{12-5} 
     \newblock Caffarelli L.~A., Peral I., 
     \newblock On $W^{1, p}$ Estimates for Elliptic Equations in Divergence Form, 
     \newblock Communications on Pure and Applied Mathematics, 1998, 51 (1): 1-21. 
\bibitem{4} 
     \newblock Chen S., Tan Z., 
     \newblock Optimal Partial Regularity for Very Weak Solutions to a Class of Nonlinear Elliptic Systems, 
     \newblock Journal of Inequalities and Applications, 2023, 33.
\bibitem{6} 
     \newblock Cianchi A., 
     \newblock A Sharp Embedding Theorem for Orlicz-Sobolev Spaces, 
     \newblock Indiana University Mathematics Journal, 1996, 45: 39-65.
\bibitem{2} 
     \newblock Clop A., Giova R., Passarelli di Napoli A., 
     \newblock Besov Regularity for Solutions of $p$-harmonic Equations, 
     \newblock Advances in Nonlinear Analysis, 2017, 8 (1): 762-778.
\bibitem{18-2} 
     \newblock Dahlke S., Diening L., Hartmann C., Scharf B., Weimar M., 
     \newblock Besov Regularity of Solutions to the $p$-Poisson Equation, 
     \newblock Nonlinear Analysis, 2016, 130: 298-329. 
\bibitem{10} 
     \newblock DiBenedetto E., Manfredi J., 
     \newblock On the Higher Integrability of the Gradient of Weak Solutions of Certain Degenerate Elliptic Systems, 
     \newblock American Journal of Mathematics, 1993, 115 (5): 1107-1134.
\bibitem{22} 
     \newblock Diening L., Kaplicky P., Schwarzacher S., 
     \newblock Campanato Estimates for the Generalized Stokes System, 
     \newblock Annali di Matematica Pura ed Applicata, 2014, 193: 1779-1794. 
\bibitem{11} 
     \newblock Evans L.~C., 
     \newblock Partial Differential Equations, 
     \newblock Graduate Studies in Mathematics, Volume 19, American Mathematical Society, 1998. 
\bibitem{GM} 
     \newblock Giaquinta M., Martinazzi L., 
     \newblock An Introduction to the Regularity Theory for Elliptic Systems, Harmonic Maps and Minimal Graphs, 
     \newblock Lecture Notes, 2005. 
\bibitem{24} 
     \newblock Grafakos L., 
     \newblock Modern Fourier Analysis, 
     \newblock Graduate Texts in Mathematics, Volume 250, Springer, 2009. 
\bibitem{18-3} 
     \newblock Hartmann C., Weimar M., 
     \newblock Besov Regularity of Solutions to the $p$-Poisson Equation in the Vicinity of a Vertex of a Polygonal Domain, 
     \newblock Results in Mathematics, 2018, 73 (41): 1-28. 
\bibitem{9} 
     \newblock Iwaniec T., 
     \newblock Projections onto Gradient Fields and $L^{p}$-estimates for Degenerated Elliptic Operators, 
     \newblock Studia Mathematica, 1983, 75 (3): 293-312.
\bibitem{21} 
     \newblock Kinnunen J., 
     \newblock Sobolev Spaces, 
     \newblock Lecture Notes, Department of Mathematics, Aalto University, 2021. 
\bibitem{12-3} 
     \newblock Kinnunen J., Zhou S., 
     \newblock A Local Estimate for Nonlinear Equations with Discontinuous Coefficients, 
     \newblock Communications in Partial Differential Equations, 1999, 24 (11-12): 2043-2068. 
\bibitem{12-4} 
     \newblock Kinnunen J., Zhou S., 
     \newblock A Boundary Estimate for Nonlinear Equations with Discontinuous Coefficients, 
     \newblock Differential and Integral Equations, 2001, 14 (4): 475-492. 
\bibitem{3} 
     \newblock Lyaghfouri A., 
     \newblock Global Higher Integrability of the Gradient of the $A$-Laplace Equation Solution, 
     \newblock arXiv:2104.08831, 2021.
\bibitem{5} 
     \newblock Musielak J., 
     \newblock Orlicz Spaces and Modular Spaces, 
     \newblock Lecture Notes in Mathematics, Volume 1034, Springer, 2006.
\bibitem{7} 
     \newblock Triebel H., 
     \newblock Theory of Function Spaces, 
     \newblock Birkh\"{a}user Verlag, Basel, 1983.
\end{thebibliography}
\end{document}